\newtheorem{theorem}{Theorem}
\newtheorem{remark}{Remark}
\newtheorem{proposition}{Proposition}
\let\ds\displaystyle
\let\scr\mathscr
\let\goth\mathfrak
\def \Liminf{\mathop{\underline{\lim}}\limits}
\def\Pb{\mathbf{P}}
\def\Ex{\mathbf{E}}
\def\Pb{\mathbf{P}}
\def\UU{\mathbb{U}}
\def\1{\mbox{1\hspace{-.25em}I}}
\begin{document}
\title{Adaptive Kalman Filter for  Systems with Unknown Initial Values}
\author{
 \textsc{Yury A. Kutoyants}\\ {\small Le Mans University, Le Mans,  France }\\
{\small Tomsk     State University, Tomsk, Russia }\\
 }

\date{}

\maketitle
\begin{abstract}
   The models of
 partially observed linear stochastic differential equations with unknown
 initial values of the non-observed component are considered in two
 situations. In the first problem, the initial value is deterministic, and in
 the second problem, it is assumed to be a Gaussian random variable. The main
 problem is the computation of adaptive Kalman filters and the discussion of
 their asymptotic optimality. The realization of this program for both models
 is done in several steps. First, a preliminary estimator of the unknown
 parameter is constructed by observations on some learning interval. Then,
 this estimator is used for the calculation of recurrent one-step MLE
 estimators, which are subsequently substituted in the equations of Kalman
filtration.
\end{abstract}

\noindent {\sl Key words}: \textsl{Hidden
  Markov process, Kalman-Bucy filter, unknown initial value,  One-step
  MLE-process, on-line 
  approximation, adaptive filter.}

\noindent MSC 2000 Classification: 62M05,  60G35, 62M15.

\section{Introduction}

Consider the partially observed continuous time system
\begin{align}
\label{21-01}
{\rm d}X_t&=f\left(\vartheta ,t\right)Y_t{\rm d}t+\sigma \left(t\right){\rm
  d}W_t,\qquad X_0=0,\qquad 0\leq t\leq T,\\
{\rm d}Y_t&=a\left(\vartheta ,t\right)Y_t{\rm d}t+b\left(\vartheta
,t\right){\rm d}V_t,\qquad y_0,\quad 0\leq t\leq T, \label{21-02}
\end{align}     
where $W_t,0\leq t\leq T$ and $V_t,0\leq t\leq T$ are independent Wiener
processes, all functions $f\left(\cdot \right),\sigma \left(\cdot
\right),a\left(\cdot \right),b\left(\cdot \right) $ are known and $\vartheta $
is some unknown finite dimensional parameter. The process $X^T=\left(X_t,0\leq
t\leq T\right)$ is observed and the process $Y^T=\left(Y_t,0\leq t\leq
T\right)$ is ``hidden''. The main problem is to find a good approximation of
the unobserved component $Y_t$ by observations $X^t=\left(X_s,0\leq s\leq
t\right)$. If $\vartheta $ and $y_0$ are known then the solution of the
problem is well-known and was given by Kalman  and Bucy \cite{KB61}: the conditional
expectation $m\left(\vartheta ,t\right)=\Ex_\vartheta \left(Y_t|X^t\right)$
is solution of the system of equations
\begin{align}
\label{21-03}
{\rm d}m\left(\vartheta ,t\right)&=a\left(\vartheta ,t\right)m\left(\vartheta
,t\right) {\rm d}t+\frac{f\left(\vartheta ,t\right)\gamma \left(\vartheta
  ,t\right)}{\sigma \left(t\right)^2}\left[{\rm d}X_t-f\left(\vartheta
  ,t\right)m\left(\vartheta
,t\right) {\rm d}t\right] ,\\
\frac{{\rm d}\gamma \left(\vartheta ,t\right)}{{\rm d}t}&=2a\left(\vartheta
,t\right) \gamma \left(\vartheta ,t\right)-\frac{f\left(\vartheta
  ,t\right)^2\gamma \left(\vartheta ,t\right)^2}{\sigma
  \left(t\right)^2}+b\left(\vartheta ,t\right)^2 \label{21-04}
\end{align} 
subject to the initial conditions $m\left(\vartheta ,0\right)=y_0 $ and
$\gamma \left(\vartheta ,0\right)=0 $.  The construction of the adaptive
filters (with $\vartheta$ unknown and $y_0$ known) for both models is given in
two steps. First, the unknown parameter is estimated, and then this estimator
is substituted into the corresponding equations, such as \eqref{21-03} and
\eqref{21-04}. Due to its practical importance, there exists a vast literature
on adaptive filtering for discrete-time models; see \cite{CMR05},
\cite{Hay14}, \cite{S08}, and references therein. Continuous-time systems are
less well studied.

General results concerning parameter estimation and adaptive filtering can be
found, for example, in the works \cite{BaCr09}, \cite{CMR05}, \cite{Hay14},
\cite{LSv2}, and \cite{S08}.

The problems related with the construction of the filtering equations in the
situations where the initial values of the hidden system are unknown or wrong
occupy a special place in the filtering theory. The different statements and
results can be found in the works \cite{KSSQAS20}, \cite{ZSL18} (see as well
references therein).

The properties of estimators for continuous time models like
\eqref{21-01}, \eqref{21-02} are studied in different asymptotics. The simplest
case corresponds to {\it small noise} in both equations, i.e., where $\sigma
(t) \rightarrow \varepsilon \sigma (t)$,
$b(\vartheta, t) \rightarrow \varepsilon b(t)$, and
$\varepsilon \rightarrow 0$. The properties of parameter estimators were
described in many works, see, for example, \cite{Kon90}, \cite{Kut84},
\cite{Kut94}, \cite{Kut17}, \cite{KZ21}, \cite{SKh96}, and references
therein. The parameter estimation in the case of a homogeneous partially
observed linear system was studied in the asymptotic of {\it large samples} in
the works \cite{A83}, \cite{Ba81}, \cite{BaB84}, \cite{BK96},
\cite{DZ86}, \cite{KS91}, \cite{Kuri23}, \cite{Kut84},
\cite{Kut04}, \cite{Kut19b}, \cite{LSv2}. The third model of
observations: small noise in the observation equations only was studied in the
works \cite{GJ1-01}, \cite{GJ2-01}, \cite{KBS84}, \cite{Kut19a}, \cite{Kut24a}, \cite{R11}.
Remark that for these three models the question of asymptotically efficient
adaptive filtration was discussed in the works \cite{KZ21}, \cite{Kut24b}, and
\cite{Kut24a} respectively.

In this work we consider three problems of adaptive filtration in the cases
where the initial value of $Y_0=y_0$ is supposed to be unknown. Note that in the
mentioned above works \cite{Kut84}, \cite{Kut94}, \cite{KZ21} it was always
supposed that $y_0$ is known. The chosen models are relatively simple but nevertheless
the construction of adaptive filters is  non trivial. All filters
are constructed in four steps. The first step is to obtain a preliminary
consistent estimator of the unknown parameter, the second step - this
estimator is used for the construction of the recurrent One-Step MLE-process
 and the third step is the substitution of this One-step
estimator-process in the Kalman-Bucy equations. The solution of these
equations gives us the adaptive Kalman filter. The last step is to describe
the error of approximation of the conditional expectation. The similar
problems of asymptotically efficient estimation of $m\left(\vartheta
,t\right)$ were considered in \cite{KZ21} (small noise in both equations), in
\cite{Kut24a} (small noise in observations only), in \cite{Kut24b} (large
samples statement) and in \cite{Kut24c} (hidden autoregressive process).
The main advantage of the proposed procedure is its relative computational
simplicity because after the small learning interval the estimator of the
unknown parameter and the adaptive filter have recurrent structure. .

\section{Model of observations} 

 Consider the partially observed linear system of equations
\begin{align}
\label{21-1}
{\rm d}X_t&=f\left(t \right)Y_t\,{\rm d}t+\varepsilon
\sigma(t)\,{\rm d}W_t,\qquad X_0=0,\quad 0\leq t\leq T,\\
 {\rm  d}Y_t&=a\left(t \right)Y_t\,{\rm d}t+\varepsilon
b\left(t \right)\,{\rm d}V_t,\qquad\; Y_0 .
\label{21-2}  
\end{align}
where $W_t,V_t,0\leq t\leq T$ are independent Wiener processes. The functions
$f\left(t \right),a\left(t \right),$ $\sigma \left(t \right),$ $b\left(t
\right);t\in \left[0,T\right] $ and the parameter $\varepsilon\in (0,1]$ are
  supposed to be known.  The initial value $\vartheta \in\Theta $ is the
  unknown parameter.  The observations are $X^T=\left(X_t,0\leq t\leq
  T\right)$ and the process $Y^T=\left(Y_t,0\leq t\leq T\right)$ is
  ``hidden''. Here $\varepsilon \in(0,1]$ is some known small parameter and
    the asymptotic corresponds to the limit $\varepsilon \rightarrow
    0$. Therefore we consider a linear system with small noise in  both
    equations. 

\subsection{Deterministic  initial value}
\subsubsection{Unknown  initial value}
Suppose that the partially observed system is
\begin{align*}
{\rm d}X_t&=f\left(t\right)Y_t\,{\rm d}t+\varepsilon \sigma \left(t\right){\rm d}W_t,\qquad
\quad X_0=0,\qquad 0\leq t\leq T,\\
{\rm d}Y_t&=a\left(t\right)Y_t\,{\rm d}t+\varepsilon b \left(t\right){\rm d}V_t,\qquad\quad  Y_0=\vartheta ,
\end{align*}
where  the  functions $f\left(t\right),\sigma
\left(t\right),a\left(t\right),b\left(t\right) $ are known and  satisfy the

\bigskip

 Conditions ${\scr A}_*$.  {\it The functions $f\left(t \right),\sigma \left(t
\right),a\left(t \right),b\left(t \right),t\in \left[0,T\right]$  are bounded and the
functions $f\left(\cdot \right),\sigma \left(\cdot\right)$ are separated from 0.}

\bigskip

 The unknown parameter is the initial value $Y_0=\vartheta \in\Theta
=\left(\alpha,\beta \right)$ and we have to estimate $\vartheta $ by
observations $X^T$. Moreover this estimator will be used later for the
construction of the adaptive Kalman filter. Our goal is to obtain a recurrent
adaptive filter. We need as well  a recurrent presentation of the estimator,
because at each 
moment $t\in\left[0,T\right]$ we need an estimator constructed by the
observations $X_s,0\leq s\leq t$. Then this estimator will be used in the
construction of the filter. 

Recall that the conditional expectation $m\left(\vartheta
,t\right)=\Ex_\vartheta \left(Y_t |{\goth F}_t^X\right)$ (here ${\goth F}_t^X$
is the $\sigma $-algebra generated by the  observations
$X^t=\left(X_s,0\leq s\leq t\right)$) satisfies the equations of Kalman-Bucy
filtration \eqref{21-03},\eqref{21-04}  
\begin{align*}
{\rm d}m\left(\vartheta ,t\right)&=a\left(t\right)m\left(\vartheta
,t\right){\rm d}t+\frac{f\left(t\right)\gamma\left(t\right)}{\varepsilon ^2\sigma
  \left(t\right)^2}\left[{\rm d}X_t-f\left(t\right)m\left(\vartheta
  ,t\right){\rm d}t\right] ,\; m\left(\vartheta ,0\right)=\vartheta ,\\
\frac{\partial \gamma\left(t\right)}{\partial t}&=2a\left(t\right)\gamma
\left(t\right)-\frac{f\left(t\right)^2\gamma\left(t\right)^2}{\varepsilon ^2\sigma
  \left(t\right)^2}+\varepsilon ^2b\left(t\right)^2 ,\qquad \gamma\left(0\right)=0,\qquad
0\leq t\leq T.  
\end{align*}
These equations can be slightly simplified by introducing the function
$\gamma_* \left(t\right)=\varepsilon ^{-2}\gamma\left(t\right) $. Then we can
write
\begin{align}
\label{21-z1}
{\rm d}m\left(\vartheta ,t\right)&=a\left(t\right)m\left(\vartheta
,t\right){\rm d}t+\frac{f\left(t\right)\gamma_* \left(t\right)}{\sigma
  \left(t\right)^2}\left[{\rm d}X_t-f\left(t\right)m\left(\vartheta
  ,t\right){\rm d}t\right] ,\; m\left(\vartheta ,0\right)=\vartheta ,\\
\frac{\partial \gamma_* \left(t\right)}{\partial t}&=2a\left(t\right)\gamma_* 
\left(t\right)-\frac{f\left(t\right)^2\gamma_* \left(t\right)^2}{\sigma
  \left(t\right)^2}+b\left(t\right)^2 ,\qquad \gamma_* \left(0\right)=0,\qquad
0\leq t\leq T.  
\label{21-z2}
\end{align}
The solution of the equation \eqref{21-z1} is
\begin{align}
\label{21-z3}
m\left(\vartheta ,t\right)&=\vartheta \Phi \left(0,t\right)+\int_{0}^{t}\Phi
\left(s,t\right) {f\left(s\right)\gamma_* \left(s\right)}{\sigma
  \left(s\right)^{-2}}{\rm d}X_s=\vartheta \Phi \left(0,t\right)+H\left(t,X^t\right)
\end{align}
where
\begin{align*}
\Phi \left(s,t\right)&=\exp\left(\int_{s}^{t}A\left(r\right){\rm d}r\right),
\qquad A\left(t\right)=a\left(t\right)-  {f\left(t\right)^2\gamma_* \left(t\right)}{\sigma
  \left(t\right)^{-2}},\\
H\left(t,X^t\right)&=\int_{0}^{t}\Phi
\left(s,t\right) {f\left(s\right)\gamma_* \left(s\right)}{\sigma
  \left(s\right)^{-2}}{\rm d}X_s.
\end{align*}
Therefore the good  approximation of $m\left(\vartheta ,t\right)$ needs a good
estimator of the parameter $\vartheta $. 

One way to estimate $\vartheta $ is  use the MLE $\hat\vartheta
_{t,\varepsilon }$. The measures $\left(\Pb_\vartheta ^{\left(t,\varepsilon
  \right)},\vartheta \in\Theta\right) $ induced by observations $X^t$ in the
measurable space of continuous functions
$\left({\cal C}\left[0,t\right],{\goth B}_t\right)$  under condition ${\scr A}_*$ are
equivalent and the likelihood ratio (LR) function is \cite{LSv1}
\begin{align}
\label{21-LR}
L\left(\vartheta ,X^t\right)&=\exp\left(\int_{0}^{t}\frac{f\left(s\right)m\left(\vartheta
  ,s\right)}{\varepsilon ^2\sigma \left(s\right)^2}\;{\rm
  d}X_s-\int_{0}^{t}\frac{f\left(s\right)^2m\left(\vartheta ,s\right)^2}{2\varepsilon ^2\sigma
  \left(s\right)^2}\;{\rm d}s\right) ,\qquad \vartheta \in\Theta  . 
\end{align}
Hence the Fisher information 
\begin{align*}
{\rm I}^t\left(\vartheta \right)=\varepsilon ^2 \Ex_\vartheta  \left[\frac{\partial \ln
    L\left(\vartheta ,X^t\right)}{\partial \vartheta }\right]^2 =
\int_{0}^{t}\frac{f\left(s\right)^2\Phi 
  \left(0,s\right)^2}{\sigma \left(s\right)^2}{\rm d}s 
\end{align*}
does not depend on $\vartheta $  and the MLE has an explicit expression
\begin{align}
\label{21-z4}
\hat\vartheta _{t,\varepsilon }=\left(\int_{0}^{t}\frac{f\left(s\right)^2\Phi
  \left(0,s\right)^2}{\sigma \left(s\right)^2}{\rm d}s  \right)^{-1} \;\int_{0}^{t}\frac{f\left(s\right)\Phi
  \left(0,s\right)}{\sigma \left(s\right)^2}\left[{\rm
    d}X_s-f\left(s\right)H\left(s,X^s\right) {\rm d}s\right]. 
\end{align}
We write  ${\rm I}^t\left(\vartheta \right) $ to mention the  estimated
parameter and  $\vartheta _0$ for the true value of $\vartheta $. 
It is easy to see that
\begin{align*}
\varepsilon ^{-1}\left(\hat\vartheta _{t,\varepsilon }-\vartheta _0\right)=
\left(\int_{0}^{t}\frac{f\left(s\right)^2\Phi \left(0,s\right)^2}{\sigma
  \left(s\right)^2}{\rm d}s \right)^{-1}
\;\int_{0}^{t}\frac{f\left(s\right)\Phi \left(0,s\right)}{\sigma
  \left(s\right)}{\rm d}\bar W_s\;\sim \;{\cal N}\left(0,{\rm
  I}^t\left(\vartheta _0\right)^{-1}\right). 
\end{align*}
Here we used the representation of observations
\begin{align*}
{\rm d}X_s=f\left(s\right)m\left(\vartheta _0,s\right){\rm d}s+\varepsilon
\sigma \left(s\right) {\rm d}\bar W_s ,\qquad X_0=0,\quad 0\leq s\leq t
\end{align*}
where $\bar W_s,0\leq s\leq t $ is an innovation Wiener process (see Theorem
7.6 in \cite{LSv1}). 

The estimator $\hat\vartheta _{t,\varepsilon } $ can be written in the
recurrent form as follows:
\begin{align*}
\hat\vartheta _{t,\varepsilon }=\frac{S\left(t\right)}{{\rm I}^t}
\end{align*}
where 
\begin{align*}
{\rm d}S\left(t\right)&=\frac{f\left(t\right)\Phi \left(0,t\right)}{\sigma
  \left(t\right)^2}\left[{\rm d}X_t-f\left(t\right)H\left(t,X^t\right) {\rm
    d}t\right],\qquad S\left(0\right)=0,\\
{\rm d}{\rm I}^t&=\frac{f\left(t\right)^2\Phi
  \left(0,t\right)^2}{\sigma \left(t\right)^2}{\rm d}t,\qquad {\rm I}^0=0,\\
{\rm d}H\left(t,X^t\right)&=A\left(t\right)H\left(t,X^t\right){\rm d}t+
{f\left(t\right)\gamma_* \left(t\right)}{\sigma   \left(t\right)^{-2}}{\rm d}X_t.
\end{align*}
It is possible to write the stochastic differential of the process
$\hat\vartheta _{t,\varepsilon },0\leq t\leq T $. 
 Let us multiply  both sides of \eqref{21-z4} by
${\rm I}^t\left(\vartheta \right) $, denote
$g\left(t\right)={f\left(t\right)\Phi \left(0,t\right)}{\sigma
  \left(t\right)^{-1}} $ and take the differential, then we obtain  
\begin{align*}
\hat\vartheta _{t,\varepsilon }\frac{f\left(t\right)^2\Phi
  \left(0,t\right)^2}{\sigma \left(t\right)^2} {\rm d}t+{\rm
  I}^t\left(\vartheta \right){\rm d}\hat\vartheta _{t,\varepsilon }=
\frac{f\left(t\right)\Phi \left(0,t\right)}{\sigma \left(t\right)^2}\left[ {\rm
  d}X_t-f\left(t\right)H\left(t,X^t\right) {\rm d}t\right]
\end{align*}
and
\begin{align}
\label{21-z4a}
{\rm d}\hat\vartheta _{t,\varepsilon }=-\frac{g\left(t\right)^2}{{\rm
    I}^t\left(\vartheta \right)}\hat\vartheta _{t,\varepsilon }{\rm d}t+ 
\frac{g\left(t\right)}{{\rm I}^t\left(\vartheta
  \right)\sigma \left(t\right)}\left[ {\rm
    d}X_t-f\left(t\right)H\left(t,X^t\right) {\rm     d}t\right].
\end{align}
In these two representations there is a question of the initial value because
$\hat\vartheta _{0,\varepsilon }=0/0 $. To avoid this problem we can  fix some
(small) $\tau $ and calculate the preliminary estimator
\begin{align}
\label{21-z5}
\hat\vartheta _{\tau ,\varepsilon }=\frac{1}{{\rm I}^\tau }\;\int_{0}^{\tau
}\frac{g\left(s\right)}{\sigma \left(s\right)}\left[{\rm 
    d}X_s-f\left(s\right)H\left(s,X^s\right) {\rm d}s\right] 
\end{align} 
and then  use the equation \eqref{21-z4a} to compute $\hat\vartheta _{t
  ,\varepsilon },\tau\leq t\leq T $.

Let us verify that \eqref{21-z4a}-\eqref{21-z5} gives the same estimator, i.e.,
that $\hat\vartheta _{\tau ,\varepsilon }\sim {\cal N}\left(\vartheta
_0,\varepsilon ^2{\rm I}^t\left(\vartheta _0\right)^{-1} \right) $.  It is
sufficient to calculate $\Ex_{\vartheta _0}\left(\hat\vartheta _{\tau
  ,\varepsilon }-\vartheta _0 \right)^2=\varepsilon ^2{\rm I}^t\left(\vartheta
_0\right)^{-1}$.  We have
\begin{align*}
\hat\vartheta _{t ,\varepsilon }&=\hat\vartheta _{\tau ,\varepsilon }
\tilde\Phi \left(\tau ,t\right)+\int_{\tau }^{t} \tilde\Phi \left(s
,t\right)\frac{g\left(s\right)}{{\rm I}^s\sigma \left(s\right)} \left[{\rm
    d}X_s-f\left(s\right)H\left(s,X^s\right){\rm d}s \right] ,
\end{align*}
where
\begin{align*}
\tilde\Phi \left(s ,t\right)=\exp\left(-\int_{s}^{t}\frac{g\left(r\right)^2}{{\rm
    I}^r}{\rm d}r\right) .
\end{align*}
Therefore the substitution of the observations gives the expression
\begin{align*}
\hat\vartheta _{t ,\varepsilon }&=\hat\vartheta _{\tau ,\varepsilon }
\tilde\Phi \left(\tau ,t\right)+\vartheta _0\int_{\tau }^{t} \tilde\Phi
\left(s ,t\right)\frac{g\left(s\right)^2}{{\rm I}^s}{\rm
  d}s+\varepsilon\int_{\tau }^{t} \tilde\Phi \left(s ,t\right)
\frac{g\left(s\right)}{{\rm I}^s} {\rm d}\bar W_s.
\end{align*}
Remark that
\begin{align*}
\int_{s}^{t}\frac{g\left(r\right)^2}{{\rm I}^r}{\rm
  d}r=\int_{s}^{t}\frac{1}{{\rm I}^r}{\rm d}\left(\int_{0 }^{r} g\left(q\right)^2
    {\rm d}q\right)=\int_{s}^{t}\frac{1}{{\rm I}^r}{\rm d} {\rm I}^r=\ln {\rm I}^t-\ln {\rm I}^s
\end{align*}
and
\begin{align*}
 \tilde\Phi \left(s ,t\right)=\frac{{\rm I}^s}{{\rm I}^t}.
\end{align*}
Therefore
\begin{align*}
\int_{\tau }^{t} \tilde\Phi \left(s ,t\right)\frac{g\left(s\right)^2}{{\rm
    I}^s}{\rm d}s=\frac{1}{{\rm I}^t}\int_{\tau }^{t}g\left(s\right)^2{\rm
  d}s=\frac{{\rm I}^t-{\rm I}^\tau }{{\rm I}^t} 
\end{align*}
and we obtain the relation
\begin{align*}
\hat\vartheta _{t ,\varepsilon }&=\hat\vartheta _{\tau ,\varepsilon }\frac{{\rm I}^\tau}{{\rm I}^ t}
+\vartheta _0  \frac{{\rm I}^t-{\rm I}^\tau }{{\rm I}^t}
+\frac{\varepsilon}{{\rm I}^t }\int_{\tau }^{t} 
g\left(s\right) {\rm d}\bar W_s.
\end{align*}
Hence
\begin{align*}
\varepsilon^{-1}\left(\hat\vartheta _{t ,\varepsilon }-\vartheta _0\right)=\varepsilon^{-1}\left(\hat\vartheta _{\tau
  ,\varepsilon }-\vartheta _0\right)\frac{{\rm I}^\tau}{{\rm I}^ t} +\frac{1}{{\rm I}^t }\int_{\tau }^{t} 
g\left(s\right) {\rm d}\bar W_s
\end{align*}
and
\begin{align*}
\varepsilon^{-2}\Ex_{\vartheta _0}\left(\hat\vartheta _{t ,\varepsilon
}-\vartheta _0\right)^2& =\varepsilon^{-2}\Ex_{\vartheta _0}\left(\hat\vartheta _{\tau  ,\varepsilon
}-\vartheta _0\right)^2\left(\frac{{\rm I}^\tau}{{\rm I}^ t}\right)^2 +\frac{1}{\left({\rm I}^t \right)^2}\int_{\tau }^{t}
g\left(s\right)^2{\rm d}s\\
& =\frac{{\rm I}^\tau}{\left({\rm I}^t\right)^2 }+
\frac{{\rm I}^t-{\rm      I}^\tau  }{\left({\rm I}^t\right)^2 } =\frac{1}{{\rm I}^ t}.
\end{align*}
Therefore $\varepsilon^{-1}\left(\hat\vartheta _{t ,\varepsilon
}-\vartheta _0\right)\sim {\cal N}\left(0,{\rm I}^t\left(\vartheta _0\right)^{-1}\right) $.

We have the recurrent equations for the estimator of the parameter and the
adaptive Kalman filter is defined in two steps. First we introduce the random
process 
\begin{align}
\label{21-z6}
{\rm d}n^*_t&=\left[a\left(t\right)-\frac{f\left(t\right)^2\gamma
    _*\left(t\right)}{\sigma \left(t\right)^2} \right]n^*_t{\rm
  d}t+\frac{f\left(t\right)\gamma_* \left(t\right)}{\sigma
  \left(t\right)^2}{\rm d}X_t
,\qquad \;n^*_0 =0.
\end{align}
The solution of this equation is (see \eqref{21-z3})
\begin{align*}
n^*_t&=\int_{0}^{t}\Phi \left(s,t\right)f\left(s\right)\gamma
_*\left(s\right)\sigma \left(s\right)^{- 2}{\rm d}X_s.
\end{align*}

The adaptive Kalman filter defined by the equation
\begin{align}
\label{21-z7}
m^\star_t=\hat\vartheta _{t ,\varepsilon}\Phi \left(0,t\right)+n^*_t.
\end{align}
The error of estimation is
\begin{align*}
\varepsilon ^{-1}\left(m^\star_t-m\left(\vartheta
_0,t\right)\right)&=\varepsilon ^{-1}(\hat\vartheta _{t 
  ,\varepsilon}-\vartheta _0)\Phi \left(0,t\right)=\frac{\Phi
  \left(0,t\right)}{{\rm I}^t} \int_{0}^{t} \frac{f\left(s\right)
  \gamma _*\left(s\right)}{\sigma \left(s\right)}{\rm d} \bar W_s
\end{align*}
The lower bound in this problem is
\begin{align*}
\lim_{\nu \rightarrow 0}\Liminf_{\varepsilon \rightarrow 0}\sup_{\left|\vartheta
  -\vartheta _0\right|\leq \nu } \varepsilon ^{-2}\Ex_\vartheta \left[\bar
  m_{t,\varepsilon }-m\left(\vartheta ,t\right)\right]^2 \geq \frac{\Phi
  \left(0,t\right)^2}{{\rm I}^t\left(\vartheta _0\right)}. 
\end{align*}
To prove this bound we follow, e.g.,  \cite{Kut24b}. We have an elementary
inequality
\begin{align*}
\sup_{\left|\vartheta -\vartheta _0\right|\leq \nu } \Ex_\vartheta \left[\bar
  m_{t,\varepsilon }-m\left(\vartheta ,t\right)\right]^2 \geq \int_{\vartheta
  _0-\nu }^{\vartheta _0+\nu }\Ex_\vartheta \left[\bar m_{t,\varepsilon
  }-m\left(\vartheta ,t\right)\right]^2p_\nu \left(\vartheta \right){\rm
  d}\vartheta
\end{align*}
where we introduced the continuous positive density $p_\nu \left(\vartheta
\right),\left(\vartheta _0-\nu ,\vartheta _0+\nu\right) $. Denote
$\tilde m\left(t\right) $ the Bayes estimator. Then by the definition of
Bayes estimator 
\begin{align*}
\int_{\vartheta _0-\nu }^{\vartheta _0+\nu }\Ex_\vartheta \left[\bar
  m_{t,\varepsilon }-m\left(\vartheta ,t\right)\right]^2p_\nu \left(\vartheta
\right){\rm d}\vartheta \geq \int_{\vartheta _0-\nu }^{\vartheta _0+\nu
}\Ex_\vartheta \left[\tilde m\left(t\right)- m\left(\vartheta
  ,t\right)\right]^2p_\nu \left(\vartheta \right){\rm d}\vartheta .
\end{align*}
Recall that
\begin{align*}
\tilde m\left(t\right)&=\Ex \left(m\left(\vartheta
,t\right)|{\goth F}_t^X\right)=\int_{\alpha }^{\beta }m\left(\theta
,t\right)p_\nu\left(\theta |X^t\right){\rm d}\theta=\frac{\ds\int_{\alpha }^{\beta }m\left(\theta
,t\right)p_\nu\left(\theta \right)L\left(\theta,X^t\right){\rm
    d}\theta}{\ds\int_{\alpha }^{\beta }p_\nu\left(\theta
  \right)L\left(\theta,X^t\right){\rm d}\theta}\\
&=H\left(t,X^t\right)+\Phi \left(0,t\right)\frac{\ds\int_{\alpha }^{\beta }\theta
p_\nu\left(\theta\right)L\left(\theta,X^t\right){\rm
    d}\theta}{\ds\int_{\alpha }^{\beta }p_\nu\left(\theta
  \right)L\left(\theta,X^t\right){\rm d}\theta}.
\end{align*}
If we change the variable $\theta =\vartheta +\varepsilon u$ and denote $
\theta _u=\vartheta +\varepsilon u$,
$Z_\varepsilon \left(\vartheta ,u,t\right)=L\left(\theta _u,X^t\right)/ L\left(\vartheta
,X^t\right)$, $\UU_\varepsilon =\left[\varepsilon ^{-1}\left(\alpha
  -\vartheta\right),\varepsilon ^{-1}\left(\beta -\vartheta \right)\right]$
then 
\begin{align*}
\frac{\tilde
m\left(t\right)-m\left(\vartheta
,t\right)}{\varepsilon \Phi \left(0,t\right)   }&=\frac{\ds\int_{\UU_\varepsilon }^{ }u
  p_\nu\left(\theta_u\right)Z_\varepsilon \left(\vartheta ,u,t\right){\rm
    d}u}{\ds\int_{\UU_\varepsilon }^{ } p_\nu\left(\theta_u\right)Z_\varepsilon
  \left(\vartheta ,u,t\right){\rm d}u}=\frac{\ds\int_{\UU_\varepsilon }^{ }u
  Z_\varepsilon \left(\vartheta ,u,t\right){\rm
    d}u}{\ds\int_{\UU_\varepsilon }^{ } Z_\varepsilon
  \left(\vartheta ,u,t\right){\rm d}u}\left(1+o\left(1\right)\right)\\
&\Longrightarrow \frac{\ds\int_{\cal R }^{ }u
  Z \left(\vartheta ,u,t\right){\rm
    d}u}{\ds\int_{\cal R }^{ } Z
  \left(\vartheta ,u,t\right){\rm d}u}={\rm I}^t\left(\vartheta
_0\right)^{-1}\int_{0}^{t}\frac{f\left(s\right)\Phi \left(0,s\right)}{\sigma
  \left(s\right)} {\rm d} W\left(s\right)
\end{align*}
Here we used the continuity of $p_\nu\left(\cdot \right)$ and denoted
$W\left(s\right),0\leq s\leq t $ a Wiener process. It can be shown that the
moments of BE converge too (see \cite{IH81}).

Therefore
\begin{align*}
\lim_{\nu \rightarrow 0}\Liminf_{\varepsilon \rightarrow 0}\sup_{\left|\vartheta
  -\vartheta _0\right|\leq \nu } \varepsilon ^{-2}\Ex_\vartheta \left[\bar
  m_{t,\varepsilon }-m\left(\vartheta ,t\right)\right]^2 \geq \frac{\Phi
  \left(0,t\right) ^2 }{{\rm I}^t\left(\vartheta _0\right)}.
\end{align*}

As 
\begin{align*}
 \varepsilon ^{-2}\Ex_\vartheta \left[m^\star_t-m\left(\vartheta ,t\right)\right]^2 = \frac{\Phi
  \left(0,t\right)^2}{{\rm I}^t\left(\vartheta \right)}
\end{align*}
the  approximation $m^\star_t $ is asymptotically efficient.

\subsubsection{Unknown initial value and parameter}

Suppose that the partially observed system is
\begin{align}
\label{21-z8}
{\rm d}X_t&=f\left(t\right)Y_t\,{\rm d}t+\varepsilon \sigma\left(t\right) \,{\rm d}W_t,\qquad X_0=0,\qquad
0\leq t\leq T,\\
\label{21-z9}
{\rm d}Y_t&=a\left(\theta_2,t\right) Y_t\,{\rm d}t+\varepsilon
b\left(t\right)\,{\rm d}V_t,\qquad \quad Y_0=\theta_1.
\end{align}
The unknown parameter is $\vartheta =\left(\theta _1,\theta _2\right)\in\Theta
=\left(\alpha _1,\beta _1\right)\times\left(\alpha _2,\beta _2\right)$,
$\alpha _1>0$.  The functions $f\left(\cdot \right),\sigma \left(\cdot
\right),a\left(\cdot \right),b\left(\cdot \right) $ satisfy the condition
${\cal A}_*$. Our goal is to find an approximation of the conditional
expectation $m\left(\vartheta ,t\right)=\Ex_\vartheta \left(Y_t|{\goth
  F}_t^X\right)$ and describe its properties  in the asymptotic $\varepsilon
\rightarrow 0$. 

Remark that the LR formula for $L\left(\vartheta ,X^t\right)$ is
\eqref{21-LR}, where the function $m\left(\vartheta ,s\right),0\leq s\leq t$
is solution of the equations like \eqref{21-z1},\eqref{21-z2}, where
$a\left(t\right)=a\left(\theta _1,t\right)$. Therefore the calculation of the MLE $\hat\vartheta
_{\varepsilon ,t}$ defined by the equation
\begin{align*}
L\left(\hat\vartheta
_{\varepsilon ,t} ,X^t\right)=\sup_{\vartheta \in\Theta }L\left(\vartheta ,X^t\right)
\end{align*}
is a very difficult computational  problem. It requires the solutions of
filtration equations for all or many $\vartheta \in\Theta $ and for all
$t\in\left[0,T\right]$. That is why we propose below the construction of the
estimator-process which needs such solutions just for one value of
$\vartheta$. Moreover this estimator has recurrent expression which reduces
one more the computation  of the adaptive filter.

 Introduce conditions ${\cal B}$.
{\it
\begin{description}
\item[${\cal B}_1$.]  The function $a\left(\vartheta ,\cdot
\right),\vartheta \in \bar\Theta $ has two continuous derivatives on
$\vartheta $. 

\item[${\cal B}_2$.]  The functions $f\left(\cdot \right),\sigma \left(\cdot
\right),a\left(\cdot \right),b\left(\cdot \right) $ have continuous
derivatives in $t\in\left[0,T\right]$.
\end{description}
}

The adaptive recurrent K-B filter is constructed in several steps. First we
estimate the parameter $\theta _2$ by the observations on the small
interval $\left[0,\tau _\varepsilon \right]$, where $\tau _\varepsilon
\rightarrow 0$. Then we use this estimator in the construction of the
parameter $\theta _1$ by observations on the interval $\left[0,\tau
  \right]$. The obtained estimator of $\vartheta $ is used to introduce the
One-step MLE-process. The last step is to substitute this One-step
estimator-process in the equations of Kalman-Bucy filtration. 

The limit system $\left(\varepsilon =0\right)$ is, of course,
\begin{align*}
\frac{{\rm d}x_t\left(\vartheta_0 \right)}{{\rm
    d}t}&=f\left(t\right)y_t\left(\vartheta _0\right), \qquad\qquad 
x_0\left(\vartheta_0 \right) =0,\qquad \quad 0\leq t\leq T,\\
\frac{{\rm d}y_t\left(\vartheta_0 \right)}{{\rm
    d}t}&=a\left(\theta _{2,0},t\right)y_t\left(\vartheta _0\right), \qquad
y_0\left(\vartheta_0 \right) =\theta _{1,0}.
\end{align*}
Here $\vartheta _0=\left(\theta _{1,0},\theta _{2,0}\right)$ is the true value
of $\vartheta $. 

 Note that the limit  function of the process $Y_t$  is 
\begin{align*}
y_t\left(\vartheta_0 \right)=\theta_{1,0} \;\phi
\left(\theta_{2,0},0,t\right),\qquad \qquad 
\phi
\left(\theta_{2,0},s,t\right)=\exp\left(\int_{s}^{t}a\left(\theta_{2,0},v\right){\rm
  d}v\right)  
\end{align*}
and 
\begin{align*}
Y_t&=\theta_{1,0} \;\phi \left(\theta_{2,0},0,t\right)+\varepsilon \int_{0}^{t}\phi
\left(\theta_{2,0},s,t\right)b\left(s\right){\rm d}V_s .
\end{align*}
The observations have the following representation as $\tau _\varepsilon \rightarrow 0$
\begin{align*}
X_{\tau_\varepsilon } &=\theta_{1,0}\int_{0}^{\tau_\varepsilon
}f\left(t\right)\phi\left(\theta_{2,0},0,t\right)\;{\rm d}t +\varepsilon
\int_{0}^{\tau_\varepsilon }\sigma \left(s\right){\rm d} W_s\\ 
&\qquad+\varepsilon\int_{0}^{\tau_\varepsilon }
f\left(t\right)\int_{0}^{t}\phi\left(\theta_{2,0},s,t\right)b\left(s\right){\rm
  d}V_s\;{\rm d}t\\
 &=\theta_{1,0}f\left(0\right)\tau_\varepsilon+\varepsilon
\sigma \left(0\right) W_{\tau _\varepsilon }+\varepsilon \tau _\varepsilon
^{3/2} f\left(0\right)b\left(0\right)\int_{0}^{1}r{\rm d}v_{r,\varepsilon }+ O\left(\varepsilon \tau _\varepsilon ^{3/2}\right),
\end{align*}
where we used the Taylor formula and introduced $v_{r,\varepsilon },0\leq
r\leq 1$ as a Wiener process independent of $ W_{\tau _\varepsilon } $. Here
and below if the object is random then convergences in $O\left(\cdot \right)$
and $o\left(\cdot \right)$ are understood {\it in probability}, for example,
$\tau _\varepsilon ^{-1}o\left(\tau _\varepsilon \right)\rightarrow 0$ in
probability.

Hence the method of moments estimator (MME) can be 
\begin{align*}
\theta ^*_{1,\tau _\varepsilon }=\frac{X_{\tau_\varepsilon
}}{f\left(0\right)\tau_\varepsilon}=\theta_{1,0}+\frac{\varepsilon
}{\sqrt{\tau _\varepsilon }}\sigma \left(0\right)\xi _{1,\varepsilon
}+\tau _\varepsilon O\left(1 \right)+\varepsilon \tau _\varepsilon ^{1/2}  b\left(0\right)\int_{0}^{1}r{\rm d}v_{r,\varepsilon }
+\varepsilon\tau _\varepsilon O\left(1 \right).
\end{align*}
The solution of the equation $\varepsilon /\sqrt{\tau _\varepsilon }=\tau
_\varepsilon  $ is the rate $\tau _\varepsilon =\varepsilon^{2/3}
$. Therefore, we obtain asymptotic normality 
\begin{align*}
\varepsilon ^{-2/3}\left(\theta ^*_{1,\tau _\varepsilon }-\theta_{1,0}
\right)\Longrightarrow {\cal N}\left(q,d^2\right)
\end{align*}
with some parameters $q,d^2$. The particular values of these parameters are
not interesting. Moreover, we have the convergence of all polynomial  moments too.

Let us fix some (small) $\tau >0$ and define  the random process 
\begin{align*}
x_{t,\varepsilon }\left(\theta _2\right)=\theta ^*_{1,\tau _\varepsilon}\int_{0}^{t
}f\left(s\right)\phi \left(\theta _2,0,s\right){\rm d}s ,\qquad \quad 0\leq t\leq \tau,
\end{align*}
and define the minimum distance estimator (MDE) $\check\theta_{2,\tau  }
$ by the equation
\begin{align*}
\int_{0}^{\tau }\left[X_t-x_{t,\varepsilon }\left(\check\theta_{2,\tau 
  }\right)\right]^2{\rm d}t=\inf_{\alpha _2<\theta _{2}<\beta _2}\int_{0}^{\tau
}\left[X_t-x_{t,\varepsilon }\left(\theta _2\right)\right]^2{\rm d}t.
\end{align*}

 Introduce the identifiability condition

\medskip

${\cal B}_3$. {\it For a   given $\tau $,  any $\nu >0$ and any $\theta
  _{2,0}\in \left[\alpha _2,\beta _2\right] $ }
\begin{align*}
g\left(\vartheta _0,\nu \right)=\inf_{\left|\theta _{2}-\theta _{2,0}\right|\geq \nu  }\int_{0}^{\tau
}\left[F\left(\theta_2,t\right)-F\left(\theta_{2,0},t\right)\right]^2{\rm
  d}t >0.
\end{align*}
Here 
\begin{align*}
F\left(\theta,t\right)=\int_{0}^{t }f\left(s\right)\phi \left(\theta
,0,s\right){\rm d}s.
\end{align*}
We have
\begin{align*}
X_t-x_{t,\varepsilon }\left(\check\theta_{2,\tau 
}\right)&=x_t\left(\vartheta _0\right)-x_{t,\varepsilon
}\left(\check\theta_{2,\tau  }\right)  +\varepsilon \int_{0}^{t}\sigma
\left(s\right){\rm d}W_s\\
&\qquad +\varepsilon\int_{0}^{t}f\left(s\right)\int_{0}^{s}\phi
\left(\theta _{2,0},r,s\right)b\left(r\right){\rm d}V_r\,{\rm d}s\\ 
&=\left(\theta _{1,0}-\theta ^*_{1,\tau_\varepsilon  }
\right)F\left(\theta_{2,0},t\right)+\theta ^*_{1,\tau_\varepsilon  }
\left[F\left(\theta_{2,0},t\right)-F\left(\check\theta_{2,\tau 
  },t\right) \right]+\varepsilon  \;\eta _t,
\end{align*}
where $\eta _t,0\leq t\leq \tau $ is a Gaussian process with bounded
variance defined by the last equality. 

Below $\left\|h_t\right\|_\tau $ is   ${\cal L}_2\left[0,\tau
  \right]$-norm. To prove the consistency of the estimator $\check\theta_{2,\tau 
  }$  we write
\begin{align*}
&\Pb_{\vartheta _0}\left(\left|\check\theta_{2,\tau }-\theta_{2,0
}\right|\geq \nu \right)=\Pb_{\vartheta _0}\left(\inf_{\left|\theta
    _{2}-\theta _{2,0}\right|\leq \nu }\left\|X_t-x_{t,\varepsilon
  }\left(\theta _{2}\right)\right\|_\tau > \inf_{\left|\theta _{2}-\theta
    _{2,0}\right|\geq \nu }\left\|X_t-x_{t,\varepsilon }\left(\theta
  _{2}\right)\right\|_\tau\right)\\
&\qquad \leq \Pb_{\vartheta _0}\left(\inf_{\left|\theta
    _{2}-\theta _{2,0}\right|\leq \nu }\left(\left\|X_t-x_{t
  }\left(\vartheta _{0}\right)\right\|_\tau+\left\|x_{t
  }\left(\vartheta _{0}\right)-x_{t,\varepsilon
  }\left(\theta _{2,0}\right)\right\|_\tau+\left\|x_{t,\varepsilon
  }\left(\theta _{2}\right)-x_{t,\varepsilon
  }\left(\theta _{2,0}\right)\right\|_\tau \right)  \right.\\
&\qquad\qquad  \left.> \inf_{\left|\theta _{2}-\theta
    _{2,0}\right|\geq \nu }\left\|x_{t,\varepsilon
  }\left(\theta _{2}\right)-x_{t,\varepsilon
  }\left(\theta _{2,0}\right)\right\|_\tau -\left\|X_t-x_{t}\left(\vartheta
  _{0}\right)\right\|_\tau -\left\|x_{t
  }\left(\vartheta _{0}\right)-x_{t,\varepsilon
  }\left(\theta _{2,0}\right)\right\|_\tau\right)\\
&\qquad \leq \Pb_{\vartheta _0}\left(2\left\|X_t-x_{t }\left(\vartheta
  _{0}\right)\right\|_\tau+2\left\|x_{t }\left(\vartheta
  _{0}\right)-x_{t,\varepsilon }\left(\theta _{2,0}\right)\right\|_\tau >
  \alpha _1\,g\left(\theta_{2,0},\nu \right)\right).
\end{align*}
By Tchebyshev inequality we have
\begin{align*}
\Pb_{\vartheta _0}\left(\left|\check\theta_{2,\tau }-\theta_{2,0
}\right|\geq \nu \right)&\leq \frac{8}{\alpha _1^2g\left(\vartheta _0,\nu
  \right)^2}\left(\Ex_{\vartheta _0}\left\|X_t-x_{t }\left(\vartheta
  _{0}\right)\right\|_\tau^2+\Ex_{\vartheta _0}\left\|x_{t }\left(\vartheta
  _{0}\right)-x_{t,\varepsilon }\left(\theta _{2,0}\right)\right\|_\tau^2
\right)\\
&\leq \frac{C\varepsilon ^2+C\varepsilon ^{4/3}}{\alpha _1^2g\left(\vartheta
  _0,\nu   \right)^2} \longrightarrow 0.
\end{align*}
Therefore the estimator $\bar\vartheta _{\tau ,\varepsilon }=\left(\theta
_{1,\tau _\varepsilon },\theta _{2,\tau } \right)$ is consistent. It can be
shown that the difference $\varepsilon ^{-2/3}\left(\bar\vartheta _{\tau
  ,\varepsilon }-\vartheta _0 \right)$ is asymptotically normal and the
moments $\varepsilon ^{-2p/3}\Ex_{\vartheta _0} \left\|\bar\vartheta _{\tau
  ,\varepsilon }-\vartheta _0\right\|^p  $ are bounded for any $p>0$. This
estimator will be used for the construction of One-step MLE-process. We need
the equations of Kalman-Bucy filter for this model of observations:
\begin{align}
\label{21-z13}
&{\rm d}m\left(\vartheta ,t\right)=A\left(\theta _2,t\right) m\left(\vartheta
,t\right){\rm d}t+\frac{f\left(t\right)\gamma_* \left(\theta
  _2,t\right)}{\sigma \left(t\right)^2}\;{\rm    d}X_t,\qquad \quad m\left(\vartheta ,0\right)=\theta _1,\\
&\frac{{\rm d}\gamma_* \left(\theta_2,t\right) }{{\rm d}t}=2a\left(\theta
_2,t\right)\gamma_* \left(\theta_2,t\right) - \frac{f\left(t\right)^2\gamma_*
  \left(\theta_2,t\right) ^2}{\sigma \left(t\right)^2}+b\left(t\right)^2,\qquad \gamma_*
 \left(\theta_2,0\right) =0,
 \label{21-z14}
\end{align}
where
\begin{align*}
A\left(\theta _2,t\right)=a\left(\theta _2,t\right) -
\frac{f\left(t\right)^2\gamma_* \left(\theta _2,t\right)}{\sigma
  \left(t\right)^2}.
\end{align*}
 
The derivatives $\dot m_1\left(\vartheta ,t\right)=\partial m\left(\vartheta
,t\right)/\partial \theta _1$ and  $\dot m_2\left(\vartheta ,t\right)=\partial m\left(\vartheta
,t\right)/\partial \theta _2$ satisfy the equations
\begin{align*}
\frac{{\rm d}\dot m_1\left(\vartheta ,t\right)}{{\rm d}t}&=A\left(\theta _2,t\right) \dot m_1\left(\vartheta
,t\right),\qquad \qquad  \dot m_1\left(\vartheta ,t\right)=1,\\
{\rm d}\dot m_2\left(\vartheta ,t\right)&=A\left(\theta _2,t\right) \dot m_2\left(\vartheta
,t\right){\rm d}t+ \dot A_2\left(\theta _2,t\right) m\left(\vartheta
,t\right){\rm d}t+               \frac{f\left(t\right)\dot\gamma_* \left(\theta
  _2,t\right)}{\sigma \left(t\right)^2}\;{\rm    d}X_t,\; \dot m_2\left(\vartheta ,0\right)=0.
\end{align*}
Therefore 
\begin{align*}
\dot m_1\left(\vartheta ,t\right)&=\Phi \left(\theta _2,0,t\right),\qquad \quad \Phi
\left(\theta _2,s,t\right)=\exp\left(\int_{s}^{t}A\left(\theta _2,r\right)
     {\rm d}r\right),\\
 \dot m_2\left(\vartheta ,t\right)&=\int_{0}^{t}\Phi
\left(\theta _2,s,t\right)\dot A_2\left(\theta _2,s\right) m\left(\vartheta
,s\right){\rm d}s+\int_{0}^{t}\Phi
\left(\theta _2,s,t\right) \frac{f\left(s\right)\dot\gamma_* \left(\theta
  _2,s\right)}{\sigma \left(s\right)^2}\;{\rm    d}X_s.
\end{align*}
The limit ($\varepsilon =0$) values are: for $\dot m_1\left(\vartheta
,t\right)=\Phi \left(\theta _2,0,t\right)\equiv     \dot y_1\left(\vartheta ,t\right) $ and for $ \dot
m_2\left(\vartheta ,t\right) $ we obtain the expression
\begin{align*}
\dot y_2\left(\vartheta ,\vartheta _0,t\right)&=\int_{0}^{t}\Phi \left(\theta
_2,s,t\right)\dot A_2\left(\theta _2,s\right)y_s\left(\vartheta ,\vartheta
_0\right){\rm d}s\\
&\qquad \qquad +\int_{0}^{t}\Phi \left(\theta _2,s,t\right)
\frac{f\left(s\right)^2\dot\gamma_* \left(\theta _2,s\right)}{\sigma
  \left(s\right)^2}y_s\left(\vartheta _0\right)  \;{\rm d}s
\end{align*}
where
\begin{align*}
y_t\left(\vartheta _0\right) &=\theta _{1,0}\phi \left(\theta
_{2,0},0,t\right) ,\qquad \quad \phi \left(\theta
_{2,0},0,t\right)=\exp\left(\int_{0}^{t}a\left(\theta _{2,0},s\right){\rm
  d}s\right),\\
y_t\left(\vartheta ,\vartheta _0\right)&=\theta _1\Phi
\left(\theta _2,0,t\right)+\theta _{1,0}\int_{0}^{t}\Phi
\left(\theta _2,s,t\right)\frac{f\left(s\right)^2\dot\gamma_* \left(\theta
  _2,s\right)}{\sigma \left(s\right)^2}\phi
\left(\theta _{2,0},0,s\right){\rm d}s.
\end{align*}
The function $\dot y_2\left(\vartheta ,\vartheta _0,t\right) $ at the point
$\vartheta =\vartheta _0$ takes the value $\dot y_2\left(\vartheta _0,t\right)
$ where
\begin{align*}
\dot y_2\left(\vartheta _0,t\right)&=\int_{0}^{t}\Phi \left(\theta
_{2,0},s,t\right)\dot a\left(\theta _{2,0},s\right)y_s\left( \vartheta
_0\right){\rm d}s \\
 &=\theta _{1,0}\int_{0}^{t}\Phi \left(\theta
_{2,0},s,t\right)\phi \left(\theta _{2,0},0,s\right) \dot a\left(\theta
_{2,0},s\right){\rm d}s.
\end{align*}

The Fisher information matrix is
\begin{align*}
{\bf I}_\tau ^t\left(\vartheta \right)=\left(
\begin{array}{cc}
\ds\int_{\tau }^{t}\frac{f\left(s\right)^2\dot y_1\left(\vartheta ,s\right)^2}{\sigma \left(s\right)^2}{\rm
  d}s&\ds \int_{\tau }^{t}\frac{f\left(s\right)^2\dot y_1\left(\vartheta
,s\right)\dot y_2\left(\vartheta ,s\right)}{\sigma \left(s\right)^2}{\rm d}s \\ 
\ds\int_{\tau
}^{t}\frac{f\left(s\right)^2\dot y_1\left(\vartheta ,s\right)\dot y_2\left(\vartheta ,s\right)}{\sigma \left(s\right)^2}{\rm d}s&\ds\int_{\tau }^{t}\frac{f\left(s\right)^2\dot
y_2\left(\vartheta ,s\right)^2}{\sigma \left(s\right)^2}{\rm d}s\\
\end{array}
\right).
\end{align*}

Introduce  the condition 

\medskip

${\cal B}_3$. {\it For a given $\tau $ and any $t\in (\tau ,T]$ and any
$\vartheta \in\Theta $ the Fisher information is non degenerate }
\begin{align*}
\inf_{\left\|{\rm e} \right\|=1,  {\rm e}\in {\cal R}^2}\;\;{\rm e}^\top{\bf
  I}_\tau ^t\left(\vartheta \right){\rm e}>0. 
\end{align*}

The One-step MLE-process is
\begin{align}
\label{21-z15}
\vartheta _{t,\varepsilon }^\star=\bar\vartheta _{\tau ,\varepsilon }+{\bf
  I}_\tau ^t\left(\bar\vartheta _{\tau ,\varepsilon } \right)^{-1} \int_{\tau
}^{t}\frac{f\left(s\right)\dot {\rm m}\left(\bar\vartheta _{\tau ,\varepsilon
  },s\right)}{\sigma \left(s\right)^2 }\left[{\rm
    d}X_s-f\left(s\right)m\left(\bar\vartheta _{\tau ,\varepsilon },s
  \right){\rm d}s\right].
\end{align}
Here the vector $\dot {\rm m}\left(\vartheta ,s\right)=\left(\dot
m_1\left(\vartheta ,t\right),\dot m_2\left(\vartheta ,t\right)\right)^\top
$. Remark that the vector $\dot {\rm m}\left(\vartheta ,s\right) $ can be
replaced by the vector $\dot {\rm y}\left(\vartheta ,s\right)=\left(\dot {
  y}_1\left(\vartheta ,s\right),\dot { y}_2\left(\vartheta ,s\right)
\right)^\top $ and the asymptotic properties of the estimator will be the
same.

Define the Gaussian process $\eta _{t }\left(\vartheta _0\right)=\left(\eta
_{1,t }\left(\vartheta _0\right),\eta _{2,t }\left(\vartheta
_0\right)\right)^\top  $
\begin{align*}
\eta _{t }\left(\vartheta _0\right)={\bf I}_\tau ^t\left(\vartheta
\right)^{-1}\int_{\tau }^{t}\frac{f\left(s\right)\dot {\rm y}_s\left(\vartheta
  _0\right)}{\sigma \left(s\right)} \;{\rm d}w\left(s\right), \qquad \tau \leq t\leq T,
\end{align*}
where $\dot {\rm y}_t\left(\vartheta _0\right)=\left(\dot y_1\left(\vartheta
_0,t\right),\dot y_2\left(\vartheta
_0,t \right)\right)^\top$ and   $w\left(s\right),0\leq s\leq T$ is some Wiener process.

\begin{proposition}
\label{P21-z1} Suppose that the conditions ${\cal A}_*$ and ${\cal B}_1-{\cal
  B}_3 $ are satisfied. Then the One-step MLE-process $\vartheta
_{t,\varepsilon }^\star,\tau <t\leq T $ is consistent, asymptotically
normal
\begin{align}
\label{21-z16}
\eta _{t,\varepsilon }\left(\vartheta _0\right)=\left(\eta _{1,t,\varepsilon }\left(\vartheta _0\right),\eta _{2,t,\varepsilon }\left(\vartheta _0\right) \right)^\top\equiv \varepsilon
^{-1}\left(\vartheta _{t,\varepsilon }^\star-\vartheta 
_0\right)\Longrightarrow  {\cal N}\left(0,{\bf
  I}_\tau ^t\left(\vartheta _{0} \right)^{-1}\right)
\end{align}
and the process $\eta _{t,\varepsilon }\left(\vartheta _0\right),\tau
_*\leq t\leq T  $ for any  $\tau _*\in (\tau ,T]$ converges in distribution in $\left({\cal
  C}\left(0,T\right),{\goth B}\right)$ to the Gaussian process
\begin{align}
\label{21-z17}
\eta _{t ,\varepsilon }\left(\vartheta _0\right)\Longrightarrow \eta
_{t  }\left(\vartheta _0\right) ,\qquad \tau _*\leq t\leq T.
\end{align}

\end{proposition}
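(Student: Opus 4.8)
The plan is to establish \eqref{21-z16} and \eqref{21-z17} by a standard linearization-of-the-score argument for the one-step MLE-process, treating the preliminary estimator $\bar\vartheta_{\tau,\varepsilon}$ as a rate-$\varepsilon^{2/3}$ consistent input. First I would substitute the observation representation ${\rm d}X_s=f(s)m(\vartheta_0,s){\rm d}s+\varepsilon\sigma(s){\rm d}\bar W_s$ into the defining equation \eqref{21-z15}, splitting the stochastic integral into a deterministic (drift) part and a noise part. This yields
\begin{align*}
\varepsilon^{-1}(\vartheta_{t,\varepsilon}^\star-\vartheta_0)
&=\varepsilon^{-1}(\bar\vartheta_{\tau,\varepsilon}-\vartheta_0)
+{\bf I}_\tau^t(\bar\vartheta_{\tau,\varepsilon})^{-1}\!
\int_\tau^t\frac{f(s)\,\dot{\rm m}(\bar\vartheta_{\tau,\varepsilon},s)}{\sigma(s)^2}\,
f(s)\bigl[m(\vartheta_0,s)-m(\bar\vartheta_{\tau,\varepsilon},s)\bigr]{\rm d}s\\
&\qquad+{\bf I}_\tau^t(\bar\vartheta_{\tau,\varepsilon})^{-1}\!
\int_\tau^t\frac{f(s)\,\dot{\rm m}(\bar\vartheta_{\tau,\varepsilon},s)}{\sigma(s)}\,{\rm d}\bar W_s.
\end{align*}

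The core of the argument is a first-order Taylor expansion of $m(\cdot,s)$ in the parameter around $\bar\vartheta_{\tau,\varepsilon}$, writing $f(s)[m(\vartheta_0,s)-m(\bar\vartheta_{\tau,\varepsilon},s)]=-f(s)\dot{\rm m}(\bar\vartheta_{\tau,\varepsilon},s)^\top(\bar\vartheta_{\tau,\varepsilon}-\vartheta_0)+R_\varepsilon(s)$, where the remainder $R_\varepsilon(s)$ is controlled by the second $\vartheta$-derivatives of $m$ (finite under ${\cal B}_1$) times $\|\bar\vartheta_{\tau,\varepsilon}-\vartheta_0\|^2=O_{\Pb}(\varepsilon^{4/3})$. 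Inserting this, the leading drift term produces $-{\bf I}_\tau^t(\bar\vartheta_{\tau,\varepsilon})^{-1}{\bf I}_\tau^t(\bar\vartheta_{\tau,\varepsilon})\varepsilon^{-1}(\bar\vartheta_{\tau,\varepsilon}-\vartheta_0)$, which cancels the first term exactly at the level of the empirical information; what survives is the stochastic integral against ${\rm d}\bar W_s$ plus remainders of order $\varepsilon^{-1}\cdot\varepsilon^{4/3}=\varepsilon^{1/3}\to0$. I would then replace $\bar\vartheta_{\tau,\varepsilon}$ by $\vartheta_0$ in the information matrix and in $\dot{\rm m}$, using consistency of $\bar\vartheta_{\tau,\varepsilon}$ together with continuity in $\vartheta$ (conditions ${\cal B}_1$, ${\cal B}_2$) and the nondegeneracy ${\cal B}_3$ to justify ${\bf I}_\tau^t(\bar\vartheta_{\tau,\varepsilon})^{-1}\to{\bf I}_\tau^t(\vartheta_0)^{-1}$ and $\dot{\rm m}(\bar\vartheta_{\tau,\varepsilon},s)\to\dot{\rm y}(\vartheta_0,s)$ uniformly on $[\tau_*,T]$. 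This leaves the Gaussian limit
\begin{align*}
\varepsilon^{-1}(\vartheta_{t,\varepsilon}^\star-\vartheta_0)
={\bf I}_\tau^t(\vartheta_0)^{-1}\int_\tau^t
\frac{f(s)\,\dot{\rm y}(\vartheta_0,s)}{\sigma(s)}\,{\rm d}\bar W_s+o_{\Pb}(1),
\end{align*}
whose covariance is ${\bf I}_\tau^t(\vartheta_0)^{-1}{\bf I}_\tau^t(\vartheta_0){\bf I}_\tau^t(\vartheta_0)^{-1}={\bf I}_\tau^t(\vartheta_0)^{-1}$, giving the pointwise convergence \eqref{21-z16}.

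For the functional convergence \eqref{21-z17}, I would argue that the right-hand side, as a process in $t\in[\tau_*,T]$, is an Itô integral with continuous integrand, hence has continuous trajectories, and identify its law with that of $\eta_t(\vartheta_0)$ since both are Gaussian with the same mean zero and the same covariance kernel. Tightness in ${\cal C}$ follows from a standard moment bound on increments: using the Burkholder–Davis–Gundy or Kolmogorov criterion on $\Ex_{\vartheta_0}\|\eta_{t_2,\varepsilon}(\vartheta_0)-\eta_{t_1,\varepsilon}(\vartheta_0)\|^{2p}\le C|t_2-t_1|^p$, which the boundedness of all coefficients (condition ${\cal A}_*$) and the uniform control of $\dot{\rm m}$ make routine.

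The main obstacle will be making the Taylor-remainder control uniform in $t\in[\tau_*,T]$ and in $s\in[\tau,t]$ simultaneously, so that the drift-cancellation holds not just pointwise but as a statement about processes. Concretely, I must show that $\sup_{\tau_*\le t\le T}\bigl\|{\bf I}_\tau^t(\bar\vartheta_{\tau,\varepsilon})^{-1}\int_\tau^t (f^2\dot{\rm m}/\sigma^2)\,R_\varepsilon(s)\,{\rm d}s\bigr\|=o_{\Pb}(\varepsilon)$; this needs a uniform-in-$\vartheta$ bound on the second derivatives of the filter $m(\vartheta,s)$, obtained by differentiating the Kalman–Bucy equations \eqref{21-z13}–\eqref{21-z14} twice and applying Gronwall's inequality to the resulting linear ODEs for $\partial^2 m/\partial\vartheta^2$, whose coefficients are bounded under ${\cal A}_*$ and ${\cal B}_1$–${\cal B}_2$. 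The polynomial-moment bounds on $\varepsilon^{-2/3}(\bar\vartheta_{\tau,\varepsilon}-\vartheta_0)$ asserted earlier in the excerpt then close the estimate.
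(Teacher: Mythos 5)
Your proposal is correct and follows essentially the same route as the paper: the same three-term decomposition of $\varepsilon^{-1}(\vartheta_{t,\varepsilon}^\star-\vartheta_0)$, the CLT for the stochastic integral, the Taylor expansion of $m(\cdot,s)$ in $\vartheta$ producing the cancellation of the preliminary-estimator error up to $O(\varepsilon^{1/3})$, and tightness via a Kolmogorov-type moment bound on increments. The only difference is cosmetic (you center the expansion at $\bar\vartheta_{\tau,\varepsilon}$ while the paper passes to $\vartheta_0$ with relative error $O(\varepsilon^{2/3})$), and your remark on the need for uniform second-derivative bounds via Gronwall spells out what the paper dismisses as ``direct but cumbersome.''
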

\begin{proof} 
Consider the difference
\begin{align}
\label{21-z18}
\varepsilon ^{-1}\left(\vartheta _{t,\varepsilon }^\star-\vartheta
_0\right)&=\varepsilon ^{-1}\left(\bar\vartheta _{\tau ,\varepsilon
}-\vartheta _0\right) +{\bf
  I}_\tau ^t\left(\bar\vartheta _{\tau ,\varepsilon } \right)^{-1} \int_{\tau
}^{t}\frac{f\left(s\right)\dot {\rm m}\left(\bar\vartheta _{\tau ,\varepsilon
  },s\right)}{\sigma \left(s\right) }{\rm d}\bar W_s\nonumber\\
&\quad + {\bf
  I}_\tau ^t\left(\bar\vartheta _{\tau ,\varepsilon } \right)^{-1}\int_{\tau
}^{t}\frac{f\left(s\right)^2\dot {\rm m}\left(\bar\vartheta _{\tau ,\varepsilon
  },s\right)}{\varepsilon \sigma \left(s\right)^2 } \left[m\left(\vartheta _{0},s
  \right)-m\left(\bar\vartheta _{\tau ,\varepsilon },s
  \right)\right]{\rm d}s.
\end{align}
The consistency of the estimator $\bar\vartheta _{\tau ,\varepsilon } $ and
the central limit theorem for stochastic integrals allow us to write
\begin{align*}
\pi _{t,\varepsilon }\equiv {\bf I}_\tau ^t\left(\bar\vartheta _{\tau ,\varepsilon } \right)^{-1}
\int_{\tau }^{t}\frac{f\left(s\right)\dot {\rm m}\left(\bar\vartheta _{\tau
    ,\varepsilon },s\right)}{\sigma \left(s\right) }{\rm d}\bar
W_s\Longrightarrow \eta _t\left(\vartheta _0\right)\sim {\cal N}\left(0,{\bf I}_\tau ^t\left(\vartheta _{0}
\right)^{-1} \right).
\end{align*}
If we consider the vector $\pi _{t_1,\varepsilon },\ldots,\pi
_{t_k,\varepsilon } $, where $\tau \leq t_1\leq t_2\leq \ldots\leq t_k\leq T$
then we obtain the convergence
\begin{align*}
\bigl(\pi _{t_1,\varepsilon },\ldots,\pi
_{t_k,\varepsilon }\bigr)\Longrightarrow \bigl(\eta _{t_1}\left(\vartheta
_0\right),\ldots,\eta _{t_k}\left(\vartheta _0\right)\bigr).
\end{align*}

For the ordinary integral we have the relations
\begin{align*}
&{\bf   I}_\tau ^t\left(\bar\vartheta _{\tau ,\varepsilon } \right)^{-1}\int_{\tau
}^{t}\frac{f\left(s\right)^2\dot {\rm m}\left(\bar\vartheta _{\tau ,\varepsilon
  },s\right)}{\varepsilon \sigma \left(s\right)^2 } \left[m\left(\vartheta _{0},s
  \right)-m\left(\bar\vartheta _{\tau ,\varepsilon },s
  \right)\right]{\rm d}s\\
&\qquad \qquad ={\bf   I}_\tau ^t\left(\vartheta _{0} \right)^{-1}\int_{\tau
}^{t}\frac{f\left(s\right)^2\dot {\rm m}\left(\vartheta _{0
  },s\right)}{\varepsilon \sigma \left(s\right)^2 } \left[m\left(\vartheta _{0},s
  \right)-m\left(\bar\vartheta _{\tau ,\varepsilon },s
  \right)\right]{\rm d}s\left(1+O\left(\varepsilon ^{2/3}\right)\right)\\
&\qquad \qquad ={\bf   I}_\tau ^t\left(\vartheta _{0} \right)^{-1}\int_{\tau
}^{t}\frac{f\left(s\right)^2\dot {\rm m}\left(\vartheta _{0
  },s\right)}{\sigma \left(s\right)^2 } \dot {\rm m}\left(\vartheta _{0
  },s\right)^\top{\rm d}s\frac{\left(\vartheta _0-\bar\vartheta _{\tau ,\varepsilon
  }\right)}{\varepsilon } \left(1+O\left(\varepsilon ^{2/3}\right)\right)\\
&\qquad \qquad =\frac{\left(\vartheta _0-\bar\vartheta _{\tau ,\varepsilon
  }\right)}{\varepsilon }+ \frac{\left(\vartheta _0-\bar\vartheta _{\tau ,\varepsilon
  }\right)}{\varepsilon } O\left(\varepsilon ^{2/3}\right)
  =\frac{\left(\vartheta _0-\bar\vartheta _{\tau ,\varepsilon 
  }\right)}{\varepsilon }+O\left(\varepsilon ^{1/3}\right)
\end{align*}
because
\begin{align*}
\int_{\tau
}^{t}\frac{f\left(s\right)^2\dot {\rm m}\left(\vartheta _{0
  },s\right)\dot {\rm m}\left(\vartheta _{0
  },s\right)^\top}{\sigma \left(s\right)^2 } {\rm d}s\longrightarrow {\bf
  I}_\tau ^t\left(\vartheta _{0} \right). 
\end{align*}

The substitution of these limits in \eqref{21-z18}  proves  \eqref{21-z16}.

To prove the weak convergence \eqref{21-z17} we have to verify the estimate
\begin{align*}
\Ex_{\vartheta _0} \left\|\eta _{t_2,\varepsilon }\left(\vartheta
_0\right)-\eta _{t_1,\varepsilon }\left(\vartheta _0\right)\right\| ^4\leq C\,\left|t_2-t_1\right|^2.
\end{align*}
The calculations are direct but cumbersome. It uses the non deneracy of the
Fisher information,  boundedness of the
derivatives and the elementary properties of stochastic and ordinary integrals.
\qed
\end{proof}

Remark that the equations of the adaptive filter \eqref{21-z19},\eqref{21-z20}
are given in recurrent form. It is possible to write the One-step MLE-process in
the recurrent form too. To simplify its calculation we replace $\dot {\rm
  m}\left(\bar\vartheta_{\tau ,\varepsilon },s \right)$ by $\dot {\rm
  y}\left(\bar\vartheta_{\tau ,\varepsilon },s \right)$ in
\eqref{21-z15}. Then we  multiply  both sides of obtained equation  by
${\bf I}_\tau 
^t\left(\bar\vartheta_{\tau ,\varepsilon } \right) $, denote the matrix
\begin{align*}
{\bf L}\left(\vartheta ,t\right)=\frac{f\left(t\right)^2\dot {\rm
  y}\left(\vartheta,t \right)\dot {\rm
  y}\left(\vartheta,t \right)^\top }{\sigma \left(t\right)^2}
\end{align*}
and take the differential of the equation 
\begin{align*}
{\bf I}_\tau
^t\left(\bar\vartheta_{\tau ,\varepsilon } \right)\vartheta _{t,\varepsilon }^\star={\bf I}_\tau
^t\left(\bar\vartheta_{\tau ,\varepsilon } \right)\bar\vartheta _{\tau ,\varepsilon }+ \int_{\tau
}^{t}\frac{f\left(s\right)\dot {\rm y}\left(\bar\vartheta _{\tau ,\varepsilon
  },s\right)}{\sigma \left(s\right)^2 }\left[{\rm
    d}X_s-f\left(s\right)m\left(\bar\vartheta _{\tau ,\varepsilon },s
  \right){\rm d}s\right].
\end{align*}
Then we obtain
\begin{align*}
{\bf I}_\tau ^t\left(\bar\vartheta_{\tau ,\varepsilon } \right) {\rm
  d}\vartheta ^\star_{t,\varepsilon }+{\bf L}\left(\bar\vartheta_{\tau
  ,\varepsilon } ,t\right)\vartheta ^\star_{t,\varepsilon }{\rm d}t&={\bf
  L}\left(\bar\vartheta_{\tau ,\varepsilon } ,t\right)\bar\vartheta_{\tau
  ,\varepsilon }\\
&\qquad +\frac{f\left(s\right)\dot {\rm y}\left(\bar\vartheta _{\tau ,\varepsilon
  },s\right)}{\sigma \left(s\right)^2 }\left[{\rm
    d}X_s-f\left(s\right)m\left(\bar\vartheta _{\tau ,\varepsilon },s
  \right){\rm d}s\right]
\end{align*}
and
\begin{align*}
 {\rm
  d}\vartheta ^\star_{t,\varepsilon }&={\bf I}_\tau ^t\left(\bar\vartheta_{\tau
   ,\varepsilon } \right)^{-1}{\bf L}\left(\bar\vartheta_{\tau 
  ,\varepsilon } ,t\right)\left[\bar\vartheta_{\tau
  ,\varepsilon }-\vartheta ^\star_{t,\varepsilon }\right]{\rm d}t\\
&\qquad +{\bf I}_\tau ^t\left(\bar\vartheta_{\tau
   ,\varepsilon } \right)^{-1}\frac{f\left(t\right)\dot {\rm y}\left(\bar\vartheta _{\tau ,\varepsilon
  },t\right)}{\sigma \left(t\right)^2 }\left[{\rm
    d}X_t-f\left(t\right)m\left(\bar\vartheta _{\tau ,\varepsilon },t  \right){\rm d}s\right].
\end{align*}
The Fisher information can be written in the recurrent form.

\subsubsection{Adaptive filter}

Suppose that we already have the estimators $\bar\vartheta _{\tau ,\varepsilon
} $ and $\vartheta _{t,\varepsilon }^\star=\left(\theta _{1,t,\varepsilon
}^\star,\theta _{2,t,\varepsilon }^\star \right)^\top,\tau <t\leq T $. Then the
adaptive filter  $m^\star_{t,\varepsilon },\tau_* <t\leq T $, where $\tau _*>\tau $ is
\begin{align}
\label{21-z19}
{\rm d}m^\star_{t,\varepsilon }&=a\left(\theta _{2,t,\varepsilon
}^\star,t\right)m^\star_{t,\varepsilon }{\rm d}t+\frac{f\left(t\right)\gamma
  _*^\star\left(t \right)}{\sigma \left(t\right)^2}\left[{\rm
    d}X_t-f\left(t\right)m^\star_{t,\varepsilon }{\rm d}t\right],\quad
m^\star_{\tau_* ,\varepsilon },\\ 
\frac{{\rm d}\gamma _*^\star\left(t \right)
}{{\rm d}t}&=2a\left(\theta _{2,t,\varepsilon }^\star,t\right)\gamma
_*^\star\left(t \right)-\frac{f\left(t\right)^2\gamma 
  _*^\star\left(t \right)^2}{\sigma \left(t\right)^2}+b\left(t\right)^2,\qquad
\gamma _*^\star\left(\tau_*  \right). 
\label{21-z20}
\end{align}
The  initial value  $\gamma _*^\star\left(\tau_*  \right) $  is the solution of
the equation
\begin{align}
\label{21-z21}
\frac{{\rm d}\gamma _*^\star\left(s \right) }{{\rm d}s}&=2a\left(\theta
_{2,\tau ,\varepsilon }^\star,s\right)\gamma _*^\star\left(s
\right)-\frac{f\left(s\right)^2\gamma _*^\star\left(s \right)^2}{\sigma
  \left(s\right)^2}+b\left(s\right)^2,\qquad \gamma _*^\star\left(0 \right)=0,\quad 0\leq s\leq \tau _*
\end{align}
at the point $s=\tau_* $. 

To calculate $m^\star_{\tau_* ,\varepsilon } $ we
denote $$R\left(\theta _2,t\right)=\Phi \left(\theta _2,0,t\right)^{-1}
f\left(t\right)\gamma _*\left(\theta _2,t \right)\sigma
\left(t\right)^{-2}=\Phi \left(\theta _2,0,t\right)^{-1}B\left(\theta _2,t \right) $$ and
remark that
\begin{align*}
m\left(\vartheta ,t\right)&=\Phi \left(\theta _2,0,t\right)\left[\theta
  _1+\int_{0}^{t}R\left(\theta _2,s\right){\rm d}X_s\right]\\
&= \Phi \left(\theta _2,0,t\right)\left[\theta
  _1+R\left(\theta _2,t\right)X_t -    \int_{0}^{t}R'_s\left(\theta _2,s\right)X_s{\rm d}s\right]\\
&=\theta
  _1\Phi \left(\theta _2,0,t\right)+\frac{f\left(t\right)\gamma _*\left(\theta _2,t
\right)}{\sigma \left(t\right)^{2}}X_t -    \int_{0}^{t}\Phi \left(\theta
_2,s,t\right)\left[ A'_s\left(\theta _2,s\right)+  B'_s\left(\theta
_2,s\right)\right]X_s{\rm d}s .
\end{align*}
Here $R'_s\left(\vartheta ,s\right) $ and  $A'_s\left(\vartheta ,s\right) $
are the derivatives of $R\left(\vartheta ,s\right) ,A\left(\vartheta ,s\right)
$ w.r.t. $s$.

The initial value is calculated as follows
\begin{align}
\label{21-z22}
m^\star_{\tau_* ,\varepsilon }&=\theta^\star_{1,\tau_*,\varepsilon } \Phi
\left(\theta^\star_{2,\tau_*,\varepsilon
},0,\tau_*\right)+\frac{f\left(\tau_*\right)\gamma
  _*\left(\theta^\star_{2,\tau_*,\varepsilon },\tau_* \right)}{\sigma
  \left(\tau_*\right)^{2}}X_{\tau_*}\nonumber\\
&\quad  - \int_{0}^{\tau_*}\Phi \left(\theta
^\star_{2,\tau_*,\varepsilon },s,\tau_*\right)\left[
  A'_s\left(\theta^\star_{2,\tau_*,\varepsilon },s\right)+ B'_s\left(\theta
  ^\star_{2,\tau_*,\varepsilon },s\right)\right]X_s{\rm d}s.
\end{align}
\begin{theorem}
\label{T21-z1} Let the conditions ${\cal A}_*$ and ${\cal B}_1-{\cal B}_3$
hold. Then the error of approximation
\begin{align*}
\varepsilon ^{-1}\left(m^\star_{t ,\varepsilon
}-m\left(\vartheta_0,t\right)\right) \Longrightarrow \xi _m\left(\vartheta
_0,t\right),\qquad \tau _*\leq t\leq T 
 \end{align*}
where the Gaussian process $\xi _m\left(\vartheta
_0,t\right), \tau _*\leq t\leq T  $  is defined below in \eqref{21-z23}.

\end{theorem}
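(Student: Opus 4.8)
The strategy is to reduce the filtering error to the already-established asymptotics of the One-step MLE-process by linearising the Kalman--Bucy equations \eqref{21-z19}--\eqref{21-z20} in the parameter. Writing $A^\star(r)=a(\theta^\star_{2,r,\varepsilon},r)-f(r)^2\gamma_*^\star(r)\sigma(r)^{-2}$ and $\Phi^\star(s,t)=\exp(\int_s^tA^\star(r)\,{\rm d}r)$, the mild form of the adaptive filter on $[\tau_*,t]$ is
\begin{align*}
m^\star_{t,\varepsilon}=m^\star_{\tau_*,\varepsilon}\,\Phi^\star(\tau_*,t)+\int_{\tau_*}^t\Phi^\star(s,t)\,\frac{f(s)\gamma_*^\star(s)}{\sigma(s)^2}\,{\rm d}X_s,
\end{align*}
and the true filter $m(\vartheta_0,t)$ has the identical representation with $A(\theta_{2,0},\cdot)$, $\Phi(\theta_{2,0},\cdot,\cdot)$ and $\gamma_*(\theta_{2,0},\cdot)$. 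Subtracting, the difference $\Delta_t=m^\star_{t,\varepsilon}-m(\vartheta_0,t)$ solves the linear equation
\begin{align*}
{\rm d}\Delta_t=A^\star(t)\Delta_t\,{\rm d}t+\bigl[A^\star(t)-A(\theta_{2,0},t)\bigr]m(\vartheta_0,t)\,{\rm d}t+\bigl[B^\star(t)-B_0(t)\bigr]\,{\rm d}X_t,
\end{align*}
with $B^\star=f\gamma_*^\star\sigma^{-2}$, $B_0=f\gamma_*(\theta_{2,0},\cdot)\sigma^{-2}$ and initial value $\Delta_{\tau_*}$ determined by \eqref{21-z22}. This isolates the two sources of error: the forcing terms driven by the coefficient discrepancies, and the initial condition at $\tau_*$.

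Next I would linearise the coefficient differences. By Proposition \ref{P21-z1} one has $\vartheta^\star_{t,\varepsilon}-\vartheta_0=O(\varepsilon)$ uniformly on $[\tau_*,T]$, so the smoothness assumptions ${\cal B}_1$--${\cal B}_2$ give $a(\theta^\star_{2,t,\varepsilon},t)-a(\theta_{2,0},t)=\dot a(\theta_{2,0},t)(\theta^\star_{2,t,\varepsilon}-\theta_{2,0})+O(\varepsilon^2)$. The genuinely delicate point is the Riccati sensitivity $\gamma_*^\star(t)-\gamma_*(\theta_{2,0},t)$: because $\gamma_*^\star$ solves \eqref{21-z20} with the \emph{time-varying} estimator $\theta^\star_{2,t,\varepsilon}$ (and with an initial value at $\tau_*$ computed from the frozen estimator via \eqref{21-z21}), it is not simply $\gamma_*(\theta^\star_{2,t,\varepsilon},t)$. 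I would therefore subtract the two Riccati equations, obtain a linear ODE for $\gamma_*^\star-\gamma_*(\theta_{2,0},\cdot)$ forced by $\dot a(\theta_{2,0},\cdot)(\theta^\star_{2,\cdot,\varepsilon}-\theta_{2,0})$, and solve it to show this difference is $O(\varepsilon)$ with leading term expressible through $\dot\gamma_*(\theta_{2,0},\cdot)$ and the parameter-error path. Feeding these expansions back, the discrepancies $A^\star-A(\theta_{2,0},\cdot)$ and $B^\star-B_0$ become linear in $\vartheta^\star_{\cdot,\varepsilon}-\vartheta_0$ up to $O(\varepsilon^2)$.

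Substituting the innovation representation ${\rm d}X_s=f(s)m(\vartheta_0,s)\,{\rm d}s+\varepsilon\sigma(s)\,{\rm d}\bar W_s$ and multiplying by $\varepsilon^{-1}$, I would note that the term $[B^\star-B_0]\varepsilon\sigma\,{\rm d}\bar W_s$ is $O(\varepsilon^2)$ and hence negligible after rescaling; the whole limit is driven through the deterministic part $f(s)m(\vartheta_0,s)\,{\rm d}s$ by the rescaled error $\eta_{s,\varepsilon}(\vartheta_0)=\varepsilon^{-1}(\vartheta^\star_{s,\varepsilon}-\vartheta_0)$. Applying variation of constants, replacing $A^\star\to A(\theta_{2,0},\cdot)$, $\Phi^\star\to\Phi(\theta_{2,0},\cdot,\cdot)$ and $m(\vartheta_0,\cdot)\to y_\cdot(\vartheta_0)$ in the deterministic limits, and invoking the functional convergence $\eta_{\cdot,\varepsilon}(\vartheta_0)\Longrightarrow\eta_\cdot(\vartheta_0)$ of \eqref{21-z16}--\eqref{21-z17}, the rescaled error $\varepsilon^{-1}\Delta_t$ converges to a linear functional of the Gaussian path $\{\eta_s(\vartheta_0):\tau_*\le s\le t\}$; this functional is precisely $\xi_m(\vartheta_0,t)$ of \eqref{21-z23}. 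The initial term $\varepsilon^{-1}\Delta_{\tau_*}$ is handled separately by expanding \eqref{21-z22} in the parameter error, giving the boundary value $\xi_m(\vartheta_0,\tau_*)=\dot{\rm y}(\vartheta_0,\tau_*)^\top\eta_{\tau_*}(\vartheta_0)$.

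Finally, to upgrade finite-dimensional convergence to weak convergence in $({\cal C}[\tau_*,T],{\goth B})$, I would establish the Kolmogorov-type moment bound $\Ex_{\vartheta_0}|\varepsilon^{-1}(\Delta_{t_2}-\Delta_{t_1})|^4\le C|t_2-t_1|^2$, exactly as in the tightness step of Proposition \ref{P21-z1}, using non-degeneracy of the Fisher information (${\cal B}_3$), boundedness of the derivatives, and the elementary bounds on stochastic and ordinary integrals. The main obstacle I anticipate is the Riccati sensitivity together with the fact that the time-varying estimator inside \eqref{21-z19}--\eqref{21-z20} forbids the naive plug-in identity $m^\star_{t,\varepsilon}\approx m(\vartheta^\star_{t,\varepsilon},t)$: the slow drift of $\theta^\star_{2,s,\varepsilon}$ contributes at the same order $\varepsilon$ as its fluctuations, so the limit $\xi_m$ genuinely depends on the whole trajectory of $\eta_s(\vartheta_0)$ and the full functional CLT of Proposition \ref{P21-z1} is indispensable.
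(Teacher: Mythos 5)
Your proposal follows essentially the same route as the paper: form the difference equations for $\delta_m(t)=m^\star_{t,\varepsilon}-m(\vartheta_0,t)$ and for the Riccati discrepancy $\delta_\gamma(t)=\gamma_*^\star(t)-\gamma_*(\theta_{2,0},t)$, linearise the coefficients in the parameter error using the $O(\varepsilon)$ accuracy of the One-step MLE-process, solve the resulting linear equations by variation of constants, expand the initial value \eqref{21-z22} at $\tau_*$, and pass to the limit via the functional convergence of Proposition~\ref{P21-z1}, with tightness from a fourth-moment Kolmogorov bound. Your observation that the time-varying estimator inside \eqref{21-z19}--\eqref{21-z20} precludes a naive plug-in and forces a separate sensitivity analysis of the Riccati equation is exactly the point the paper handles through the $\delta_\gamma$ equation and Gr\"onwall's lemma.

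There is, however, one substantive point where your limit does not match the one the theorem refers to. You discard the stochastic forcing term $\bigl[B^\star(s)-B_0(s)\bigr]\varepsilon\sigma(s)\,{\rm d}\bar W_s=\varepsilon f(s)\sigma(s)^{-1}\delta_\gamma(s)\,{\rm d}\bar W_s$ as $O(\varepsilon^2)$, so your limiting functional contains only the initial-value term and the drift term driven by $\dot a(\theta_{2,0},s)y_s(\vartheta_0)\eta_{2,s}(\vartheta_0)$. The paper instead retains the normalised limit of this stochastic integral as the third summand of $\xi_m(\vartheta_0,t)$ in \eqref{21-z23}, namely $\int_{\tau_*}^{t}\Phi(\theta_{2,0},s,t)f(s)\sigma(s)^{-1}\xi_\gamma(\vartheta_0,s)\,{\rm d}w(s)$ with $\xi_\gamma$ the weak limit of $\varepsilon^{-1}\delta_\gamma$; this is also why the paper needs the explicit computation of $\xi_\gamma(\vartheta_0,\cdot)$, which your argument renders unnecessary. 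Your order count is internally consistent (the integrand $\delta_\gamma$ is itself $O(\varepsilon)$ and the term carries an explicit prefactor $\varepsilon$, so after dividing by $\varepsilon$ the integral is $\varepsilon\int\Phi f\sigma^{-1}(\varepsilon^{-1}\delta_\gamma)\,{\rm d}\bar W_s=o_p(1)$), but since the theorem defines $\xi_m(\vartheta_0,t)$ by \eqref{21-z23} you cannot simply assert that your functional ``is precisely'' that of \eqref{21-z23}: you must either carry the $\delta_\gamma$-driven stochastic integral through the limit as the paper does, or state explicitly that it vanishes and that your limit omits the corresponding term. As written, the proposal proves convergence to a process that differs from the one named in the statement.
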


\begin{proof}
Denote $\delta _m\left(t\right)=m^\star_{t ,\varepsilon }-m\left(\vartheta
_0,t\right)$, $\delta _\vartheta \left(t\right)=\vartheta _{t,\varepsilon
}^\star-\vartheta _0$, $\delta _{\theta_2} \left(t\right)=\theta _{2,t,\varepsilon
}^\star-\theta _{2,0}$, $\delta _\gamma \left(t\right)=\gamma
_*^\star\left(t\right)-\gamma _*\left(\vartheta _{2,0},t\right)$. We have
already the estimate $\delta _\vartheta \left(t\right)=O\left(\varepsilon
\right)$. The equations for $\delta _m\left(t\right) $ and $\delta _\gamma
\left(t\right)$ we obtain as difference of  equations for corresponding
functions
\begin{align*}
{\rm d}\delta _m\left(t\right)&=A\left(\theta^\star _{2,t,\varepsilon},t\right)\delta _m\left(t\right){\rm d}t +\left[a\left(\theta^\star _{2,t,\varepsilon},t\right)-a\left(\theta _{2,0},t\right)\right]m\left(\vartheta
_0\right){\rm d}t \\
&\qquad +\varepsilon \,\frac{f\left(t\right)\delta _\gamma
  \left(t\right)}{\sigma \left(t\right)}\;{\rm d}\bar W_t ,\qquad \quad \delta
_m\left(\tau _*\right) ,\\
\frac{{\rm d}\delta _\gamma \left(t\right)}{{\rm
    d}t}&=2\left[a\left(\theta^\star _{2,t,\varepsilon
},t\right)-a\left(\theta _{2,0  },t\right)\right]  \gamma_* \left(\theta
_{2,0},t\right)\\
&\qquad + \left[2a\left(\theta _{2,0
  },t\right)-\frac{f\left(t\right)^2 \left[\gamma
_*^\star\left(t\right)+\gamma _*\left(\vartheta _{2,0},t\right)\right]
  }{\sigma \left(t\right)^2}\right] \delta _\gamma \left(t\right),\quad \delta
_\gamma \left(\tau _*\right). 
\end{align*}
We have $a\left(\theta^\star _{2,t,\varepsilon },t\right)-a\left(\theta _{2,0
},t\right)=\dot a\left(\theta _{2,0 },t\right)\delta _{\theta _{2 }}
\left(t\right)\left(1+O\left(\varepsilon \right)\right) $. Using these
relations, boundedness of the derivatives and Gr\"onwell lemma it can be shown
that $\delta _\gamma \left(t\right)=O\left(\varepsilon \right), \delta
_m\left(t\right)=O\left(\varepsilon \right)$. 

Therefore
\begin{align*}
{\rm d}\delta _m\left(t\right)&=A\left(\theta _{2,0},t\right)\delta
_m\left(t\right){\rm d}t +\dot a\left(\theta _{2,0},t\right)m\left(\vartheta
_0,t\right)\delta _\vartheta \left(t\right){\rm d}t  +\varepsilon \,\frac{f\left(t\right)\delta _\gamma
  \left(t\right)}{\sigma \left(t\right)}\;{\rm d}\bar W_t+O\left(\varepsilon
^2\right) ,\\
\frac{{\rm d}\delta _\gamma \left(t\right)}{{\rm
    d}t}&=2\dot a\left(\theta _{2,0  },t\right)  \gamma_* \left(\theta
_{2,0},t\right)\delta _\vartheta \left(t\right) + 2A\left(\theta _{2,0
  },t\right)\delta _\gamma \left(t\right)+O\left(\varepsilon ^2\right). 
\end{align*}
The solutions of these linear equations are
\begin{align*}
\delta _\gamma \left(t\right)&=\delta _\gamma \left(\tau _*\right)\Phi \left(\theta
_{2,0},\tau _*,t\right)^2+2\int_{\tau _*}^{t} \Phi \left(\theta
_{2,0},s,t\right)^2\dot a\left(\theta _{2,0  },s\right)  \gamma_* \left(\theta
_{2,0},s\right)\delta _\vartheta \left(s\right){\rm d}s+O\left(\varepsilon
^2\right),\\
\delta _m\left(t\right)&=\delta _m\left(\tau _*\right)\Phi \left(\theta
_{2,0},\tau _*,t\right)+\int_{\tau _*}^{t} \Phi \left(\theta
_{2,0},s,t\right)\dot a\left(\theta _{2,0},s\right)m\left(\vartheta
_0,s\right)\delta _\vartheta \left(s\right){\rm d}s\\
&\qquad +\varepsilon \int_{\tau _*}^{t} \Phi \left(\theta
_{2,0},s,t\right){f\left(s\right)\delta _\gamma
  \left(s\right)}{\sigma \left(s\right)^{-1}}\;{\rm d}\bar W_s+O\left(\varepsilon
^2\right).
\end{align*}
Solution of the equation related with \eqref{21-z21} gives us  the representation
\begin{align*}
\varepsilon^{-1}\delta _\gamma \left(\tau
_*\right)&=2\varepsilon^{-1}\int_{0}^{\tau _*}\Phi \left(\theta 
_{2,0},s,\tau _*\right)^2\dot a\left(\theta _{2,0},s\right)\gamma _*\left(\theta
_{2,0},s\right)\delta _{\theta _2}\left(\tau _*\right){\rm d}s +O\left(\varepsilon
\right)\\ 
&\Longrightarrow  2\int_{0}^{\tau _*}\Phi \left(\theta
_{2,0},s,\tau _*\right)^2\dot a\left(\theta _{2,0},s\right)\gamma _*\left(\theta
_{2,0},s\right)\eta  _{2,\tau _*}\left(\theta _2\right){\rm d}s\equiv \xi
_\gamma \left(\vartheta _0\right). 
\end{align*}   
Hence
\begin{align*}
\varepsilon^{-1}\delta _\gamma \left(t\right)&\Longrightarrow \xi _\gamma
\left(\vartheta _0\right)\Phi \left(\theta
_{2,0},\tau _*,t\right)^2+2\int_{\tau _*}^{t} \Phi \left(\theta
_{2,0},s,t\right)^2\dot a\left(\theta _{2,0  },s\right)  \gamma_* \left(\theta
_{2,0},s\right)\eta _{2,s} \left(\vartheta _0\right){\rm d}s\\
&\equiv \xi _\gamma \left(\vartheta _0,t\right),\qquad \qquad \tau _*\leq t\leq T.
\end{align*}

Let us denote $C\left(\theta _2,s\right)=A'_s\left(\theta_{2},s\right)+ B'_s\left(\theta
  _{2 },s\right) $ then we can write (see \eqref{21-z22}) 
\begin{align*}
\varepsilon ^{-1}\delta _m\left(\tau _*\right)&\Longrightarrow \Phi
\left(\theta _{2,0},0,\tau _*\right)\eta _{1,\tau _*}\left(\vartheta
_0\right)+\theta _{1,0}\dot \Phi \left(\theta _{2,0},0,\tau _*\right)\eta
_{2,\tau _*}\left(\vartheta _0\right)+\frac{f\left(\tau _*\right)}{\sigma \left(\tau
  _*\right)^2}\xi _\gamma \left(\vartheta _0\right) x_{\tau _*}\left(\vartheta _0\right)\\ &\quad
-\eta _{2,\tau _*}\left(\vartheta
_0\right)\,\int_{0}^{\tau _*}\left[\dot \Phi \left(\theta _{2,0},0,\tau
  _*\right)C\left(\theta _{2,0},s\right)+\Phi \left(\theta _{2,0},0,\tau
  _*\right)\dot C\left(\theta _{2,0},s\right)\right] x_s\left(\vartheta _0\right)\,{\rm d}s \\
&\equiv \xi _m\left(\vartheta _0\right). 
\end{align*}

Therefore
\begin{align}
\label{21-z23}
\varepsilon ^{-1}\delta _m\left(t\right)&\Longrightarrow   \xi _m\left(\vartheta _0\right)\Phi \left(\theta
_{2,0},\tau _*,t\right)+\int_{\tau _*}^{t} \Phi \left(\theta
_{2,0},s,t\right)\dot a\left(\theta _{2,0},s\right)y_s\left(\vartheta
_0\right)\eta  _{2,s} \left(\vartheta _0\right){\rm d}s\nonumber\\
&\quad + \int_{\tau _*}^{t} \Phi \left(\theta
_{2,0},s,t\right){f\left(s\right)}{\sigma \left(s\right)^{-1}}\xi _\gamma
\left(\vartheta _0,s\right)\;{\rm d}w\left(s\right) \equiv \xi _m\left(\vartheta _0,t\right),\quad \tau _*\leq t\leq T.
\end{align}
\end{proof}

\subsection{Random initial value}

Let us consider an example of partially observed system with the random
initial value of non observed component:
\begin{align}
\label{21-71}
{\rm d}X_t&=fY_t\,{\rm d}t+\varepsilon \sigma \,{\rm d}W_t,\qquad X_0=0,\qquad
0\leq t\leq T,\\
\label{21-72}
{\rm d}Y_t&=\vartheta Y_t\,{\rm d}t+\varepsilon b\,{\rm d}V_t,\qquad \quad Y_0=y_0\sim
{\cal N}\left(0,d^2\right).
\end{align}

Here $f>0,\sigma >0,b>0,d^2>0$ and $\vartheta \in\Theta =\left(\alpha ,\beta
\right), \alpha >0$.  The random variable $Y_0$ in independent on   the Wiener
processes  $W_t,V_t;0\leq t\leq T$.  The sample value   of $Y_0$ we denote
$y_0$. As usual, we have to estimate the parameter $\vartheta $
by observations $X^T=\left(X_t,0\leq t\leq T\right)$.

\subsubsection{Parameter estimators}
Remark that these processes can be written in explicit form 
\begin{align*}
X_t&=\frac{fy_0}{\vartheta }\left[e^{\vartheta t}-1\right]+\varepsilon
fb\int_{0}^{t}\int_{0}^{s}e^{\vartheta\left( s-r\right)} 
{\rm d}V_r\,{\rm d}s +\varepsilon \sigma W_t, \qquad
0\leq t\leq T, \\
Y_t&=y_0\,e^{\vartheta t}+\varepsilon b\int_{0}^{t}e^{\vartheta \left(t-s\right)}{\rm d}V_s.
\end{align*}

The limit system  ($\vartheta _0$ is the true value)
\begin{align*}
\frac{{\rm d}x_t\left(\vartheta _0\right)}{{\rm d}t}&=f\,y_t\left(\vartheta
_0\right),\qquad x_0\left(\vartheta _0\right)=0,\qquad 0\leq t\leq T,\\
\frac{{\rm d}y_t\left(\vartheta _0\right)}{{\rm d}t}&=\vartheta_0 \,y_t\left(\vartheta
_0\right),\qquad y_0\left(\vartheta _0\right)\sim{\cal N}\left(0,d^2\right)
\end{align*}
gives us a couple of  random processes $x_t\left(\vartheta
_0\right),y_t\left(\vartheta _0\right),0\leq t\leq T$. As $y_t\left(\vartheta
\right)=y_0e^{\vartheta t}$, we have the equality
\begin{align*}
x_t\left(\vartheta \right)=\frac{y_0 f}{\vartheta }\left(e^{\vartheta
  t}-1\right), \qquad 0\leq t\leq T.
\end{align*}
There are several ways to construct a preliminary estimator of $\vartheta
_0$. Note that we can not   define the MDE by the equality
\begin{align*}
\check\vartheta _{\tau,\varepsilon  }=\arg\inf_{\vartheta \in\Theta
}\int_{0}^{\tau }\left[X_t-x_t\left(\vartheta \right)\right]^2{\rm d}t 
\end{align*}
because $y_0$ is unknown and it's conditional expectation is always
$m\left(\vartheta ,0\right)=0$.

Let us denote $x'_t\left(\vartheta \right)$ the derivative of
$x_t\left(\vartheta \right)$  w.r.t.  $t$ and  remark that  ($t_1<t_2$)
\begin{align*}
\vartheta_0 =\frac{\ln \left|x'_{t_2}\left(\vartheta_0 \right)\right|-\ln
\left|x'_{t_1}\left(\vartheta_0 \right)\right|}{ \left|t_2-t_1\right| }. 
\end{align*}
Therefore if we have a consistent estimator $X'_{t,\varepsilon }$ of the
derivative, then the MME
\begin{align}
\label{MME}
\vartheta _{t_2,\varepsilon}^*= \frac{\ln \left|X'_{t_2,\varepsilon }\right|-\ln
\left|X'_{t_1,\varepsilon }\right|}{ \left|t_2-t_1\right| }
\end{align}
can be  consistent too. 

Note that  for the estimator of the derivative $X'_{t,\varepsilon
}=\left({X_{t+\delta _\varepsilon }-X_t}\right)/{\delta _\varepsilon } $ we have
\begin{align*}
X'_{t,\varepsilon }&=\frac{fy_0}{\delta _\varepsilon }\int_{t}^{t+\delta
  _\varepsilon }e^{\vartheta_0 s}{\rm d}s+\frac{fb\varepsilon }{\delta _\varepsilon
}\int_{t}^{t+\delta _\varepsilon }\int_{0}^{s}e^{\vartheta_0\left(s-r\right)}{\rm
  d}V_r\,{\rm d}s+\frac{\varepsilon }{\delta _\varepsilon }\sigma \left[W_{t+\delta
    _\varepsilon }-W_t\right]\\
&=fy_0 e^{\vartheta_0t}+\frac{f\vartheta_0  y_0}{2}e^{\vartheta_0t}\delta
_\varepsilon +fb\varepsilon  \int_{0}^{t}e^{\vartheta_0\left(t-r\right)}{\rm
  d}V_r+\frac{\varepsilon }{\sqrt{\delta  _\varepsilon} }\sigma\; \xi _{t,\varepsilon}
+O\left(\varepsilon \sqrt{\delta  _\varepsilon} \right)
\end{align*} 
where $ \xi _{t,\varepsilon}\sim {\cal N}\left(0,1\right) $. The correct  rate
$\delta _\varepsilon=\varepsilon ^{2/3} $ we obtain as solution of the equation $ \delta _\varepsilon =\varepsilon /\sqrt{\delta _\varepsilon }$.
Taylor expansion allows to write
\begin{align*}
 \ln\left|X'_{t,\varepsilon }\right|&=\ln \left| fy_0
 e^{\vartheta_0t}\left(1+\left[\frac{\vartheta _0}{2}+\frac{\sigma\;\xi
     _{t,\varepsilon}}{fy_0}e^{-\vartheta_0t} \right] \varepsilon
 ^{2/3}\left(1+o\left(1\right)\right)  \right)\right| \\
 &=\ln \left(f\left|y_0\right|\right)+\vartheta _0t+
\ln \left|1+\left[\frac{\vartheta _0}{2}+\frac{\sigma\;\xi
     _{t,\varepsilon}}{fy_0}e^{-\vartheta_0t} \right] \varepsilon
 ^{2/3}\left(1+o\left(1\right)\right)\right|\\
 &=\ln \left(f\left|y_0\right|\right)+\vartheta _0t+
\left[\frac{\vartheta _0}{2}+\frac{\sigma\;\xi
     _{t,\varepsilon}}{fy_0}e^{-\vartheta_0t} \right] \varepsilon
 ^{2/3}\left(1+o\left(1\right)\right),
\end{align*}
where the last equality is valid asymptotically as $\varepsilon \rightarrow
0.$

Therefore we proved the following proposition. 

\begin{proposition}
\label{P21-z2} 
For the MME $\vartheta _{t_2,\varepsilon}^* $     the following
relation holds
\begin{align*}
\varepsilon ^{-2/3} \left(\vartheta _{t_2,\varepsilon}^*-\vartheta _0\right)
&\Longrightarrow
\frac{\sigma}{f\left(t_2-t_1\right)}\left[\frac{\xi _{t_2}}{y_0}e^{-\vartheta_0t_2}
  -\frac{\xi _{t_1}}{y_0}e^{-\vartheta_0t_1} \right].
\end{align*}
  Here $\xi _{t_1}\sim {\cal N}\left(0,1\right) ,\xi _{t_2}\sim {\cal
  N}\left(0,1\right) $ and $y_0\sim {\cal N}\left(0,d^2\right)$ are
independent Gaussian random variables. 
\end{proposition}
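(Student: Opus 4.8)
The plan is to substitute the Taylor expansion of $\ln|X'_{t,\varepsilon}|$ derived immediately above the statement directly into the definition \eqref{MME}. Writing that expansion at $t=t_2$ and at $t=t_1$ and subtracting, I would note that the deterministic constant $\ln(f|y_0|)$ cancels and that the common $\tfrac{\vartheta_0}{2}\varepsilon^{2/3}$ contribution inside the bracket cancels up to an $o(\varepsilon^{2/3})$ term, leaving
\[
\ln|X'_{t_2,\varepsilon}|-\ln|X'_{t_1,\varepsilon}|=\vartheta_0\left(t_2-t_1\right)+\frac{\sigma}{fy_0}\Bigl[\xi_{t_2,\varepsilon}e^{-\vartheta_0t_2}-\xi_{t_1,\varepsilon}e^{-\vartheta_0t_1}\Bigr]\varepsilon^{2/3}\left(1+o\left(1\right)\right).
\]
Dividing by $\left|t_2-t_1\right|=t_2-t_1$ recovers $\vartheta_{t_2,\varepsilon}^*-\vartheta_0$, and multiplying by $\varepsilon^{-2/3}$ produces exactly the normalized error whose weak limit must be identified.

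Before passing to the limit I would record two negligibility facts about the representation of $X'_{t,\varepsilon}$ obtained above. First, the stochastic integral term $fb\varepsilon\int_{0}^{t}e^{\vartheta_0(t-r)}{\rm d}V_r$ is of order $\varepsilon=o(\varepsilon^{2/3})$ relative to the surviving term $\varepsilon^{2/3}\sigma\xi_{t,\varepsilon}$, so it is absorbed into the $\left(1+o(1)\right)$ factor; this is precisely why $b$ does not enter the limit and only $\sigma$ survives. Second, the $O(\varepsilon\sqrt{\delta_\varepsilon})$ remainder is of still smaller order and may be discarded.

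The genuinely new step is the joint distributional statement. Recalling that $\xi_{t,\varepsilon}=(W_{t+\delta_\varepsilon}-W_t)/\sqrt{\delta_\varepsilon}$, each marginal is exactly ${\cal N}\left(0,1\right)$. The key observation I would make is that since $\delta_\varepsilon=\varepsilon^{2/3}\to0$ while $t_2-t_1>0$ is fixed, for all $\varepsilon$ small enough the intervals $[t_1,t_1+\delta_\varepsilon]$ and $[t_2,t_2+\delta_\varepsilon]$ are disjoint, whence the two increments of $W$ are independent and $\xi_{t_1,\varepsilon},\xi_{t_2,\varepsilon}$ are independent standard Gaussians. Both being functionals of $W$ alone, they are also independent of $y_0$. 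Thus for all sufficiently small $\varepsilon$ the triple $(\xi_{t_1,\varepsilon},\xi_{t_2,\varepsilon},y_0)$ already has the law of independent variables $(\xi_{t_1},\xi_{t_2},y_0)$ with the stated distributions.

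I would then close by the continuous mapping theorem applied to the map $(u_1,u_2,y)\mapsto \frac{\sigma}{f(t_2-t_1)}\left[u_2\,y^{-1}e^{-\vartheta_0t_2}-u_1\,y^{-1}e^{-\vartheta_0t_1}\right]$, which delivers the claimed limit. The main (though minor) obstacle is the presence of $y_0^{-1}$: one must observe that $y_0\neq0$ almost surely, so the map is a.s.\ continuous at the limiting triple, and one must control the $\left(1+o(1)\right)$ factor uniformly enough to justify convergence in distribution of the product, rather than merely of numerator and denominator separately. With this the normalized error $\varepsilon^{-2/3}(\vartheta_{t_2,\varepsilon}^*-\vartheta_0)$ converges weakly to the asserted expression.
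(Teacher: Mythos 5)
Your proposal follows exactly the route the paper takes: the paper's proof of Proposition \ref{P21-z2} is precisely the computation displayed immediately before the statement (the representation of $X'_{t,\varepsilon}$, the choice $\delta_\varepsilon=\varepsilon^{2/3}$, and the Taylor expansion of $\ln|X'_{t,\varepsilon}|$), after which one subtracts the expansions at $t_2$ and $t_1$ and divides by $t_2-t_1$. You merely make explicit some details the paper leaves implicit — the disjointness of the increment intervals giving exact independence of $\xi_{t_1,\varepsilon},\xi_{t_2,\varepsilon}$ and $y_0$, the negligibility of the $O(\varepsilon)$ terms, and the fact that $y_0\neq 0$ almost surely so the limit map is a.s.\ continuous — all of which are correct.
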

Remark that $\xi _{t_1}d/y_0$ and $\xi _{t_2}d/y_0$ are Cauchy variables and
hence there is no convergence of moments of this estimator.

It is easy to see that $ \varepsilon ^{-2/3}y_0 \left(\vartheta
_{t_2,\varepsilon}^*-\vartheta _0\right)$ is asymptotically normal.

\begin{remark}
\label{R21-11}
{\rm 
Introduce $t_{i,\varepsilon }=p_i/\ln \left(\frac{1}{\varepsilon
}\right)\rightarrow 0, i=1,2$. Here $p_1>0, p_2>p_1$ and $0<t_{i,\varepsilon
}<T $.   Then the normalizing and limit change   
\begin{align}
\label{21-73t}
\ln\left(\frac{1}{\varepsilon }\right)\varepsilon ^{-2/3} \left(\vartheta _{t_2,\varepsilon}^*-\vartheta _0\right)
&\Longrightarrow
\frac{\sigma}{f\left(p_2-p_1\right)}\left[\frac{\xi _{p_2}}{y_0}
  -\frac{\xi _{p_1}}{y_0} \right].
\end{align}
Here $\xi _{p_1}\sim {\cal N}\left(0,1\right)$, $\xi _{p_2}\sim {\cal
  N}\left(0,1\right)$   are idependent random ariables. 

}

\end{remark}

The Kalman-Bucy filter for conditional expectation $m\left(\vartheta
,t\right),0\leq t\leq T$  is 
\begin{align}
\label{21-73}
{\rm d}m\left(\vartheta ,t\right)&=\vartheta  m\left(\vartheta ,t\right){\rm
  d}t+\frac{f\gamma \left(\vartheta ,t\right)}{\varepsilon ^2\sigma ^2}\left[{\rm
    d}X_t-fm\left(\vartheta ,t\right){\rm d}t\right] ,\quad m\left(\vartheta
,0\right)=0,\\
\frac{\partial \gamma \left(\vartheta ,t\right)}{\partial t}&=2\vartheta
\gamma \left(\vartheta ,t\right) -\frac{f^2\gamma \left(\vartheta
  ,t\right)^2}{\varepsilon ^2\sigma ^2}+\varepsilon ^2b^2,\qquad\qquad \gamma \left(\vartheta
  ,0\right)=d^2. \label{21-74}
\end{align}

The solution of the equation \eqref{21-73} can be written explicitly
\begin{align*}
m\left(\vartheta ,t\right)&=\int_{0}^{t}\Phi \left(\vartheta
,s,t\right)\frac{f^2\gamma \left(\vartheta ,s\right)}{\varepsilon ^2\sigma ^2}
\,Y_s\,{\rm d}s+\int_{0}^{t}\Phi \left(\vartheta ,s,t\right)\frac{f\gamma
  \left(\vartheta ,s\right)}{\varepsilon \sigma } \,{\rm d}W_s\\
&=y_0\int_{0}^{t}\Phi \left(\vartheta
,s,t\right)\frac{f^2\gamma \left(\vartheta ,s\right)}{\varepsilon ^2\sigma ^2}
\,e^{\vartheta _0s}\,{\rm d}s+\int_{0}^{t}\Phi \left(\vartheta ,s,t\right)\frac{f\gamma
  \left(\vartheta ,s\right)}{\varepsilon \sigma } \,{\rm d}W_s\\
&\qquad +\int_{0}^{t}\Phi \left(\vartheta
,s,t\right)\frac{f^2\gamma \left(\vartheta ,s\right)}{\varepsilon \sigma ^2}
\,\int_{0}^{s}e^{\vartheta _0\left(s-r\right)} {\rm d}V_r\,{\rm d}s
\end{align*}
where
\begin{align*}
\Phi \left(\vartheta ,s,t\right)=\exp\left(\int_{s}^{t}\left[\vartheta -
  \frac{f^2\gamma \left(\vartheta ,r\right)}{\varepsilon ^2\sigma
    ^2}\right]{\rm d}r \right). 
\end{align*}
Let us see the asymptotic of the main term 
\begin{align*}
I\left(\tau ,\varepsilon \right)\equiv y_0\int_{0}^{\tau }\Phi \left(\vartheta
,s,\tau \right)\frac{f^2\gamma \left(\vartheta ,s\right)}{\varepsilon ^2\sigma ^2}
\,e^{\vartheta _0s}\,{\rm d}s
\end{align*}
for the small values of $\tau $. Integrating by parts we obtain 
\begin{align}
\label{21-in}
I\left(\tau ,\varepsilon \right)&=y_0\Phi \left(\vartheta
,0,\tau \right)\int_{0}^{\tau }\exp\left(\int_{0}^{s}
  \frac{f^2\gamma \left(\vartheta ,r\right)}{\varepsilon ^2\sigma
    ^2}{\rm d}r \right)\frac{f^2\gamma \left(\vartheta
  ,s\right)}{\varepsilon ^2\sigma ^2} 
\,e^{\left(\vartheta _0-\vartheta\right) s}\,{\rm d}s\nonumber\\
&=y_0\Phi \left(\vartheta
,0,\tau \right)\int_{0}^{\tau }
\,e^{\left(\vartheta _0-\vartheta\right) s}\,{\rm d}\exp\left(\int_{0}^{s}
  \frac{f^2\gamma \left(\vartheta ,r\right)}{\varepsilon ^2\sigma
    ^2}{\rm d}r \right)\nonumber\\
&=y_0\,e^{\vartheta _0\tau }-y_0\Phi \left(\vartheta
,0,\tau \right)-y_0\left(\vartheta _0-\vartheta \right)\int_{0}^{\tau }\Phi
  \left(\vartheta 
,s,\tau \right) \,e^{\vartheta _0 s}\,{\rm d}s\nonumber\\
&=y_0\,e^{\vartheta _0\tau }+O\left(\varepsilon ^2\right).
\end{align}

The function   $\gamma _*\left(\vartheta ,t\right)=\varepsilon ^{-2}\gamma
\left(\vartheta ,t\right)$  is solution of the Riccati equation
\begin{align*}
 \frac{\partial \gamma_* \left(\vartheta ,t\right)}{\partial t}&=2\vartheta
\gamma_* \left(\vartheta_* ,t\right) -\frac{f^2\gamma_* \left(\vartheta
  ,t\right)^2}{\sigma ^2}+b^2,\qquad\qquad \gamma_* \left(\vartheta
  ,0\right)=\frac{d^2}{\varepsilon ^2 }. 
\end{align*}
Recall that Riccati  equation with constant coefficients has the following  explicite
solution (see  \cite{A83})
\begin{align*}
\gamma_* \left(\vartheta ,t\right)=e^{-2r\left(\vartheta
  \right)t}\left[\frac{1}{\frac{d^2}{\varepsilon ^2}-\frac{\sigma ^2}{f^2}
    \left(\vartheta +r\left(\vartheta \right)\right)}+\frac{f^2\left(1-e^{-2r\left(\vartheta
  \right)t}\right)}{2\sigma
    ^2r\left(\vartheta \right)}    \right]^{-1}+\frac{\sigma ^2}{f^2}
    \left(\vartheta +r\left(\vartheta \right)\right).
\end{align*}
Here $r\left(\vartheta \right)=\left(\vartheta ^2+b^2f^2\sigma ^{-2}\right)^{1/2}$.
The function $\gamma_* \left(\vartheta ,t\right)$ for any  $t\in (0,T]$ has
the asymptotic expansion
\begin{align*}
\gamma_* \left(\vartheta ,t\right)=\frac{\sigma ^2}{f^2}\left[\vartheta
  +r\left(\vartheta \right) +\frac{2r\left(\vartheta \right)}{e^{2r\left(\vartheta
  \right)t} -1 }\right]\left(1+O\left(\varepsilon ^2\right)\right)=\gamma^*
\left(\vartheta ,t\right)\left(1+O\left(\varepsilon ^2\right)\right) 
\end{align*}
where the function $\gamma^* \left(\vartheta ,t\right) $ is defined by the
last equality. Moreover, for any $\tau \in (0,T]$ we have the uniform
  convergence too
\begin{align*}
\sup_{\tau \leq t\leq T}\left|\gamma_* \left(\vartheta ,t\right)-\gamma^*
\left(\vartheta ,t\right)\right| \leq C\varepsilon ^2 .
\end{align*}
Therefore for any $t\in \left(0,T\right)$ we have the approximation
\begin{align*}
\gamma \left(\vartheta ,t\right)=\varepsilon ^2\gamma^*
\left(\vartheta ,t\right)\left(1+O\left(\epsilon ^2\right)\right).
\end{align*}

Omitting the terms of order $O\left(\varepsilon ^2\right)$ we obtain the
representation
\begin{align*}
m^*\left(\vartheta ,t\right)&=m\left(\vartheta ,\tau \right)\Phi^* \left(\vartheta
,\tau ,t\right)+ y_0\int_{\tau}^{t}\Phi^* \left(\vartheta
,s,t\right)\frac{f^2\gamma^* \left(\vartheta ,s\right)}{\sigma ^2}
\,e^{\vartheta _0s}\,{\rm d}s\\
&\qquad+\varepsilon \int_{\tau}^{t}\Phi^* \left(\vartheta ,s,t\right)\frac{f\gamma
  \left(\vartheta ,s\right)}{ \sigma } \,{\rm d}W_s\\
&\qquad +\varepsilon \int_{\tau}^{t}\Phi^* \left(\vartheta
,s,t\right)\frac{f^2\gamma^* \left(\vartheta ,s\right)}{ \sigma ^2}
\,\int_{0}^{s}e^{\vartheta _0\left(s-r\right)} {\rm d}V_r\,{\rm d}s\\
&=y_0e^{\vartheta _0\tau }\Phi^* \left(\vartheta
,\tau ,t\right)+y_0\int_{\tau}^{t}\Phi^* \left(\vartheta
,s,t\right)\frac{f^2\gamma^* \left(\vartheta ,s\right)}{\sigma ^2}
\,e^{\vartheta _0s}\,{\rm d}s \; +o\left(1\right),
\end{align*}
where
\begin{align*}
\Phi^* \left(\vartheta ,s,t\right)=\exp\left(\int_{s}^{t}\left[\vartheta -
  \frac{f^2\gamma^* \left(\vartheta ,r\right)}{\sigma ^2}\right]{\rm d}r
\right). 
\end{align*}
The initial value $m\left(\vartheta ,\tau \right) $ we obtain as solution of
the equation \eqref{21-73}.  

For the derivative $\dot m_t\left(\vartheta \right)=\partial
m_t\left(\vartheta \right)/\partial \vartheta $ we have the equation
\begin{align*}
{\rm d}\dot m_t\left(\vartheta \right)&=\left[\vartheta -\frac{f^2\gamma
    ^*\left(\vartheta ,t\right)}{\sigma ^2}\right]\dot m_t\left(\vartheta
\right) {\rm d}t+\frac{f\dot \gamma
    ^*\left(\vartheta ,t\right)}{\sigma ^2}{\rm d}X_t+\left[1 -\frac{f^2\dot\gamma
    ^*\left(\vartheta ,t\right)}{\sigma ^2}\right] m_t\left(\vartheta
\right) {\rm d}t
\end{align*}
and obtain the expression
\begin{align*}
\dot m_t\left(\vartheta \right)&=y_0\int_{\tau }^{t}\Phi^* \left(\vartheta
,s,t\right)\left[t-s+\frac{f^2}{\sigma ^2}\left(\dot\gamma ^*\left(\vartheta
  ,s\right)-\int_{s}^{t}\dot\gamma ^*\left(\vartheta
  ,s\right){\rm d}r  \right)\right]\;e^{\vartheta _0s}{\rm
  d}s+o\left(1 \right)\\
&=y_0\; H\left(\vartheta ,\vartheta _0,t\right)+o\left(1
\right)\longrightarrow y_0\; H\left(\vartheta ,\vartheta _0,t\right) 
\end{align*} 
as $\varepsilon \rightarrow 0$.

Therefore the empirical Fisher information function  converges in probability
\begin{align*}
{\rm I}_\tau ^t\left(\vartheta _0\right)&=\frac{f^2}{\sigma
  ^2}\int_{\tau }^{t}\dot m_s\left(\vartheta_0 \right)^2 {\rm d}s\longrightarrow \frac{y_0^2\;f^2}{\sigma
  ^2}\int_{\tau }^{t}H\left(\vartheta_0 ,\vartheta _0,s\right) ^2{\rm d}s\nonumber\\
&=y_0^2G\left(\vartheta_0,T \right).
\end{align*}
\begin{remark}
\label{R21-15}
{\rm  The family of measures $\left(\Pb_\vartheta ,\vartheta \in\Theta \right)$
is locally asymptotically mixing normal (LAMN). The normalized likelihood
ratio function constructed by the observations $X_\tau ^T=\left(X_t,\tau \leq t\leq T\right)$ 
$$
Z_\varepsilon \left(\vartheta _0,u\right)=\frac{L\left(\vartheta
_0+\varepsilon u,X_\tau ^T\right)}{  L\left(\vartheta _0,X_\tau^T\right)},\qquad \quad
u\in\UU_\varepsilon =\left(\frac{\alpha -\vartheta _0}{\varepsilon
},\frac{\beta -\vartheta _0}{\varepsilon }\right) 
$$
 admits the representation
\begin{align*}
Z_\varepsilon \left(\vartheta _0,u\right)=\exp\left(u\,y_0\,\Delta
_\varepsilon \left(\vartheta
_0,X_\tau^T\right)-\frac{u^2}{2}y_0^2\;G\left(\vartheta_0,T \right)+r_\varepsilon
\right). 
\end{align*}
Here $r_\varepsilon \rightarrow 0$ and 
\begin{align*}
\Delta _\varepsilon \left(\vartheta _0,X^T\right)\Longrightarrow \Delta
 \left(\vartheta _0\right)\sim {\cal
  N}\left(0,G\left(\vartheta_0,T \right)\right) .
\end{align*}

We do not study the MLE but note that it has the limit Cauchy distribution
\begin{align*}
\frac{\hat\vartheta _\varepsilon -\vartheta _0}{\varepsilon }\Longrightarrow \hat u= \frac{\Delta
 \left(\vartheta _0\right)}{y_0\;G\left(\vartheta_0,T \right)}
\end{align*}
}
\end{remark}

The K-B  filter on the time interval $\left[\tau ,T\right]$ ($\tau >0$) we
re-write
 as follows
\begin{align}
\label{21-75b}
{\rm d}m_*\left(\vartheta ,t\right)&=\left[\vartheta-\frac{f^2\gamma^*
    \left(\vartheta ,t\right)}{\sigma ^2} \right]  m_*\left(\vartheta
,t\right){\rm 
  d}t+\frac{f\gamma^* \left(\vartheta ,t\right)}{\sigma ^2}{\rm
    d}X_t .
\end{align}
The initial value  $m_*\left(\vartheta
,\tau \right)=m\left(\vartheta
,\tau \right) $  can be calculated by solving the system
\eqref{21-73}-\eqref{21-74} on the interval $\left[0,\tau \right]$.

The equation for the derivative $\dot m_*\left(\vartheta
,t\right)$ is 
\begin{align}
\label{21-76}
{\rm d}\dot m_*\left(\vartheta ,t\right)&=\left[1 -{f^2\dot\gamma
    ^*\left(\vartheta ,t\right)}{\sigma 
    ^{-2}}\right]  m_*\left(\vartheta ,t\right){\rm
  d}t+\left[\vartheta -{f^2\gamma ^*\left(\vartheta ,t\right)}{\sigma
    ^{-2}}\right]\dot m_*\left(\vartheta ,t\right){\rm d}t \nonumber\\
&\qquad +{f\dot\gamma ^*\left(\vartheta ,t\right)}{\sigma    ^{-2}}{\rm
    d}X_t ,\quad \qquad \dot
m_*\left(\vartheta ,\tau \right),\quad \tau <t\leq T .
\end{align}

Note that the calcumation of the  $ m_*\left(\vartheta ,t\right) $  and $\dot
m_*\left(\vartheta ,t\right) $ by these equations does not depend on $y_0$,
which appears in the limits only.

If $\varepsilon \rightarrow 0$ then
\begin{align*}
m_*\left(\vartheta ,t\right)\longrightarrow y_t\left(\vartheta ,\vartheta
_0\right)&=y_0e^{\vartheta _0\tau } \left(\vartheta ,\vartheta
_0\right)\Phi^* \left(\vartheta ,\tau ,t\right)\\
&\qquad +y_0\int_{\tau }^{t}\Phi^* \left(\vartheta ,s,t\right) {f^2\gamma
  ^*\left(\vartheta ,s\right)}{\sigma^{-2}}\;e^{\vartheta _0s}\,{\rm
  d}s=y_0\,h_t\left(\vartheta ,\vartheta_0 \right)  ,
\end{align*}
where the function $h_t\left(\vartheta ,\vartheta _0\right)$ is defined by the last
equality.

The expression for the limit of the  derivative  $\dot m\left(\vartheta
,t\right)\longrightarrow \dot y_t\left(\vartheta ,\vartheta
_0\right) $    is the following random  function
\begin{align*}
 \dot y_t\left(\vartheta ,\vartheta
_0\right)&=  \dot y_\tau \left(\vartheta ,\vartheta
_0\right)\Phi^* \left(\vartheta ,0,\tau\right)\\
&\qquad +  y_0\int_{\tau}^{t}\Phi^* \left(\vartheta ,s,t\right)\left[h_s\left(\vartheta ,\vartheta
_0\right)+ \frac{f^2\dot\gamma ^*\left(\vartheta ,s\right)}{\sigma
  ^2}\;\left[e^{\vartheta _0s}-h_s\left(\vartheta ,\vartheta
_0\right)\right]  \right]  {\rm d}s\\
&=\dot y_0e^{\vartheta _0\tau }\tau \Phi^* \left(\vartheta ,0,\tau \right)+y_0\;\bar H_t\left(\vartheta
 ,\vartheta _0\right)\equiv y_0 \,\bar G_t\left(\vartheta ,\vartheta _0\right),
\end{align*}
where $\bar H_t\left(\vartheta ,\vartheta _0\right) $ and $\bar G_t\left(\vartheta
,\vartheta _0\right) $ are defined by the last two
equalities. 
 If $\vartheta =\vartheta _0$, then $h_s\left(\vartheta_0 ,\vartheta
_0\right)=e^{\vartheta _0s} $ and 
\begin{align*}
 \dot y_t\left(\vartheta_0 ,\vartheta
_0\right)&=  \dot y_\tau \left(\vartheta ,\vartheta
_0\right)\Phi^*  \left(\vartheta ,0,\tau\right)  + y_0 e^{\vartheta
   _0t} \int_{\tau }^{t}\exp\left(- \frac{f^2}{\sigma ^{2}}\int_{s}^{t}\gamma
  ^*\left(\vartheta_0 ,r\right){\rm d}r\right){\rm d}s\\
&= y_0e^{\vartheta _0\tau }\tau \Phi^*  \left(\vartheta ,0,\tau\right)  + y_0\;\bar H_t\left(\vartheta_0
 ,\vartheta _0\right)=y_0 \,\bar  G_t\left(\vartheta ,\vartheta _0\right). 
\end{align*}

 These limits are proved for the integrals on the intervals
$\left[\tau,T\right]$, but the value $\tau$ can be taken close to 0.  It can
be verified that the integral on the interval $\left[0,\tau\right]$ can be done
less than any constant by choosing sufficiently small $\tau$.  That is why in
the expressions for the limits we use the integrals on the intervals $(0,T]$.

The Fisher information we write in two forms: for $\varepsilon >0$ and $\varepsilon =0$
\begin{align*}
{\rm I}_{\tau,\varepsilon  }^t\left(\vartheta \right)&=\frac{f^2}{\sigma
  ^2}\int_{\tau }^{t} \dot m_*\left(\vartheta ,s\right)^2\,{\rm d}s,\qquad
\tau <t\leq T ,   \\
{\rm I}_{\tau }^t\left(\vartheta_0 \right)&=y_0^2\,J_\tau ^t\left(\vartheta _0\right),
\end{align*}
where $J_\tau ^t\left(\vartheta _0\right) $ is the corresponding deterministic
function. 
We have ${\rm I}_{\tau,\varepsilon  }^t\left(\vartheta_0 \right)
\longrightarrow  {\rm I}_{\tau }^t\left(\vartheta_0 \right) $ as $\varepsilon
\rightarrow 0$.

The One-step MLE-process $\vartheta _{t,\varepsilon }^\star,\tau <t\leq T $    can be defined by the equality
\begin{align*}
\vartheta _{t,\varepsilon }^\star=\vartheta _{\tau_* ,\varepsilon }^*+{\rm
  I}_{\tau_*,\varepsilon  }^t\left(\vartheta _{\tau ,\varepsilon }^* \right)^{-1}\frac{f}{\sigma ^2}\int_{\tau }^{t}
{ \dot m_*(\vartheta_{\tau_* ,\varepsilon }^*,s )}{ }\;\left[{\rm
      d}X_s-fm_*(\vartheta_{\tau_* ,\varepsilon }^*  ,s){\rm d}s\right]   .
\end{align*}
Here the MME $\vartheta _{\tau_* ,\varepsilon }^* $ is calculated for the values
$t_2=\frac{3\tau}{4} $ and $t_1=\frac{\tau}{2} $. The random processes $
m_*(\vartheta_{\tau_* ,\varepsilon }^*,s ),\tau \leq t\leq T $ and $\dot
m_*(\vartheta_{\tau_* ,\varepsilon }^*,s ),\tau \leq t\leq T $ are obtained as
solutions of \eqref{21-75b}  and \eqref{21-76} subject to the initial values
calculated as solutions of the same equations but written without stochastic
integrals as it was done in \eqref{21-z22}.

 We have the representation
\begin{align*}
\frac{\vartheta _{t,\varepsilon }^\star-\vartheta _0}{\varepsilon
}&=\frac{\vartheta _{\tau_* ,\varepsilon }^*-\vartheta _0}{\varepsilon}+{\rm 
  I}_{\tau_*,\varepsilon  }^t\left(\vartheta _{\tau ,\varepsilon }^* \right)^{-1}\frac{f}{\sigma }\int_{\tau }^{t}
{ \dot m_*(\vartheta_{\tau_* ,\varepsilon }^*,s )}{ }\;{\rm
      d}\bar W_s  \\
&\qquad + {\rm   I}_{\tau_*,\varepsilon  }^t\left(\vartheta _{\tau ,\varepsilon }^*
\right)^{-1}\frac{f^2}{\varepsilon \sigma^2 }\int_{\tau }^{t} 
{ \dot m_*(\vartheta_{\tau_* ,\varepsilon }^*,s )}{ }\;\left[m(\vartheta_0
  ,s)-m_*(\vartheta_{\tau_* ,\varepsilon }^*  ,s)\right] {\rm d}s. 
\end{align*}

Following the main steps of the  proof   of the Proposition \ref{P21-z1}  it
can be shown that this estimator for any $t\in (\tau ,T]$ is asymptotically
  conditionally normal:
\begin{align*}
\hat\eta _{t,\varepsilon }\left(\vartheta _0\right)&= \frac{\vartheta _{t,\varepsilon }^\star-\vartheta _0 }{\varepsilon
 }\Longrightarrow \hat \eta _t\left(\vartheta _0\right)={\rm 
  I}_{\tau }^t\left(\vartheta _{0} \right)^{-1}\frac{f}{\sigma }\int_{\tau }^{t}
{ \dot y(\vartheta_{0},\vartheta_{0},s )}{ }\;{\rm
      d}w\left(s\right)  ,\\
&= y_0^{-1}\,   {\rm 
  J}_{\tau }^t\left(\vartheta _{0} \right)^{-1}\frac{f}{\sigma }\int_{\tau }^{t}
{\bar G_s\left(\vartheta _0,\vartheta _0\right)}{ }\;{\rm
      d}w\left(s\right)\equiv   y_0^{-1}\, \zeta _\vartheta \left(\vartheta _0,t\right) ,
\end{align*}
where  $\zeta _m\left(\vartheta _0,t\right),\tau \leq t\leq T $ is a Gaussian
process. 

We have as well the following asymptotic normality
\begin{align*}
\frac{y_0\left(\vartheta _{t,\varepsilon }^\star-\vartheta _0\right) }{\varepsilon
 }\Longrightarrow \zeta _\vartheta \left(\vartheta _0,t\right) \sim{\cal N}\left(0, {\rm J}_{\tau }^t\left(\vartheta_0
 \right)^{-1}\right),\qquad \quad \tau <t\leq T. 
\end{align*}

\subsubsection{Adaptive filter}

The  adaptive Kalman filter  is 
\begin{align*}
{\rm d}m_{t,\varepsilon }^\star&=\left[\vartheta_{t,\varepsilon }^\star-{f^2\gamma^*
    \left(\vartheta_{t,\varepsilon }^\star ,t\right)}{\sigma ^{-2}} \right]  m_{t,\varepsilon }^\star{\rm 
  d}t+{f\gamma^* \left(\vartheta_{t,\varepsilon }^\star ,t\right)}{\sigma ^{-2}}{\rm
    d}X_t,\quad \tau  \leq t\leq T.
\end{align*}
The initial value $m_{\tau ,\varepsilon }^\star=m\left(\vartheta_{\tau
  ,\varepsilon }^\star,\tau \right)$ is calculated as it was explained above
in \eqref{21-z22}.
\begin{proposition}
\label{P21-z3} The error of approximation $m_{t,\varepsilon
}^\star-m\left(\vartheta _0,t\right) $ has the following\\ asymptotics
\begin{align*}
\varepsilon ^{-1}\left(m_{t,\varepsilon }^\star-m\left(\vartheta
_0,t\right)\right) \Longrightarrow \xi _m^*\left(\vartheta _0,t\right),\qquad \tau \leq t\leq T,
\end{align*}
where the Gaussian process $\xi _m^*\left(\vartheta _0,t\right) $ defined
in \eqref{21-z34} below does not depend on $y_0$.

\end{proposition}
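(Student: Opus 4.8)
The plan is to follow the scheme of the proof of Theorem~\ref{T21-z1}, adapted to the mixing--normal (LAMN) situation described in Remark~\ref{R21-15}. Introduce the error processes $\delta_m(t)=m^\star_{t,\varepsilon}-m(\vartheta_0,t)$ and $\delta_\vartheta(t)=\vartheta^\star_{t,\varepsilon}-\vartheta_0$. From the One-step MLE-process analysis we already have $\varepsilon^{-1}y_0\,\delta_\vartheta(t)\Longrightarrow\zeta_\vartheta(\vartheta_0,t)$, so that $\delta_\vartheta(t)=O(\varepsilon)$ (in probability) for every realization with $y_0\neq0$, which occurs with probability one. I would also use the uniform approximation $\gamma_*(\vartheta_0,t)=\gamma^*(\vartheta_0,t)\left(1+O(\varepsilon^2)\right)$ on $[\tau,T]$ established above, so as to replace the exact Riccati solution by its limit $\gamma^*$ everywhere up to an $O(\varepsilon^2)$ error, together with the innovation representation ${\rm d}X_t=f\,m(\vartheta_0,t)\,{\rm d}t+\varepsilon\sigma\,{\rm d}\bar W_t$.

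Subtracting the exact Kalman--Bucy filter \eqref{21-73}, rewritten with $\gamma_*$, from the adaptive equation \eqref{21-75b} taken at $\vartheta^\star_{t,\varepsilon}$, and linearizing the coefficients in $\delta_\vartheta$, I obtain a closed linear equation for $\delta_m$. Writing $A^*(\vartheta,t)=\vartheta-f^2\gamma^*(\vartheta,t)\sigma^{-2}$ and $\dot A^*(\vartheta_0,t)=1-f^2\dot\gamma^*(\vartheta_0,t)\sigma^{-2}$, the drift generated by $A^*(\vartheta^\star_{t,\varepsilon},t)m^\star_{t,\varepsilon}-A^*(\vartheta_0,t)m(\vartheta_0,t)$, together with the $\vartheta$-dependence of the gain $f\gamma^*\sigma^{-2}$ in front of ${\rm d}X_t$, collapses after cancellation of the two $\dot\gamma^*$-contributions to the single source term $m(\vartheta_0,t)\,\delta_\vartheta(t)$; the accompanying stochastic term $\varepsilon f\dot\gamma^*(\vartheta_0,t)\delta_\vartheta(t)\sigma^{-1}\,{\rm d}\bar W_t=O(\varepsilon^2)$ is discarded. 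Thus
\begin{align*}
{\rm d}\delta_m(t)&=A^*(\vartheta_0,t)\,\delta_m(t)\,{\rm d}t+m(\vartheta_0,t)\,\delta_\vartheta(t)\,{\rm d}t+O(\varepsilon^2),\qquad \delta_m(\tau)\ \text{given}.
\end{align*}
A Gr\"onwall argument, using the boundedness of the coefficients and of the derivatives from conditions ${\cal B}_1$--${\cal B}_2$, then upgrades the pointwise bound to $\sup_{\tau\le t\le T}\left|\delta_m(t)\right|=O(\varepsilon)$.

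Solving this equation by variation of constants with $\Phi^*(\vartheta_0,s,t)=\exp\!\left(\int_s^tA^*(\vartheta_0,r)\,{\rm d}r\right)$ gives
\begin{align*}
\varepsilon^{-1}\delta_m(t)&=\Phi^*(\vartheta_0,\tau,t)\,\varepsilon^{-1}\delta_m(\tau)+\int_{\tau}^{t}\Phi^*(\vartheta_0,s,t)\,m(\vartheta_0,s)\,\varepsilon^{-1}\delta_\vartheta(s)\,{\rm d}s+O(\varepsilon).
\end{align*}
The crucial point is the exact cancellation of $y_0$: since $m(\vartheta_0,s)\to y_0e^{\vartheta_0 s}$ while $\varepsilon^{-1}\delta_\vartheta(s)\to y_0^{-1}\zeta_\vartheta(\vartheta_0,s)$, each integrand converges to $e^{\vartheta_0 s}\zeta_\vartheta(\vartheta_0,s)$, free of $y_0$; the same matching applied to the initial value, where $\delta_m(\tau)=\dot m(\vartheta_0,\tau)\delta_\vartheta(\tau)(1+o(1))$ with $\dot m(\vartheta_0,\tau)\to y_0\bar G_\tau(\vartheta_0,\vartheta_0)$ (see the \eqref{21-z22}-type representation of $m^\star_{\tau,\varepsilon}$), makes $\varepsilon^{-1}\delta_m(\tau)$ converge to a $y_0$-free Gaussian variable $\chi_\tau(\vartheta_0)$. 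Using the functional convergence of the One-step MLE-process $\varepsilon^{-1}\delta_\vartheta(\cdot)$ and the continuity of the deterministic kernels, the continuous-mapping theorem for the ordinary integral yields
\begin{align}
\label{21-z34}
\xi^*_m(\vartheta_0,t)=\Phi^*(\vartheta_0,\tau,t)\,\chi_\tau(\vartheta_0)+\int_{\tau}^{t}\Phi^*(\vartheta_0,s,t)\,e^{\vartheta_0 s}\,\zeta_\vartheta(\vartheta_0,s)\,{\rm d}s,\qquad \tau\le t\le T,
\end{align}
a centred Gaussian process that does not depend on $y_0$, as claimed.

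The step I expect to be the main obstacle is precisely the control of this $y_0$-cancellation. Because the preliminary and one-step errors scale like $\varepsilon/y_0$, the bound $\delta_\vartheta=O(\varepsilon)$ degenerates as $y_0\to0$; the argument must therefore be run on the product $m(\vartheta_0,s)\,\delta_\vartheta(s)$, which stays of order $\varepsilon$ because $m(\vartheta_0,s)\propto y_0$, rather than on the two factors separately, and all convergences are first understood conditionally on $y_0$ in the LAMN sense before one verifies that the limit \eqref{21-z34} is in fact $y_0$-free. Finally, to promote the one-dimensional convergence to weak convergence of the whole trajectory $\left\{\varepsilon^{-1}\delta_m(t),\ \tau\le t\le T\right\}$ in ${\cal C}$, I would verify the Kolmogorov estimate $\Ex_{\vartheta_0}\left|\delta_m(t_2)-\delta_m(t_1)\right|^4\le C\left|t_2-t_1\right|^2$, exactly as in Proposition~\ref{P21-z1}, using the non-degeneracy of the Fisher information, the boundedness of the derivatives, and the elementary properties of the stochastic and ordinary integrals.
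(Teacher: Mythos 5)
Your proof is correct and follows essentially the same route as the paper: both write the linear error equation for $\delta_m$, linearize the coefficients in $\delta_\vartheta$, exploit the cancellation $\dot A+f^2\sigma^{-2}\dot\gamma^*=1$ so the source term collapses to $m(\vartheta_0,s)\,\delta_\vartheta(s)$, and obtain the $y_0$-free limit from the product $y_0e^{\vartheta_0 s}\cdot y_0^{-1}\zeta_\vartheta(\vartheta_0,s)$ (the paper performs the $\dot\gamma^*$-cancellation after passing to the limit rather than at the SDE level, which is immaterial). Your explicit remark that the argument must be run on the product $m(\vartheta_0,s)\,\delta_\vartheta(s)$ rather than on the two factors separately, conditionally on $y_0\neq 0$, is a point the paper leaves implicit.
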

\begin{proof}

This equation and the equation \eqref{21-75b} with $\vartheta =\vartheta _0$
allow us to write the equation for the error of estimation $\delta
_m\left(t\right)=m_{t,\varepsilon }^\star-m\left(\vartheta _0,t\right)$
\begin{align*}
\delta _m\left(t\right)&=\delta _m\left(\tau \right)  \Phi^* \left(\vartheta
_0,0,\tau\right)+\int_{\tau }^{t}\Phi \left(\vartheta _0,s,t\right)
\left[A\left(\vartheta _{t,\varepsilon }^\star,s\right)- A\left(\vartheta
  _0,s\right)\right]m_{s,\varepsilon }^\star{\rm d}s\\
&\qquad  +f\sigma ^{-2}\int_{\tau }^{t}\Phi^* \left(\vartheta _0,s,t\right)
\left[\gamma^*\left(\vartheta _{s,\varepsilon }^\star,s\right)-
  \gamma^*\left(\vartheta _0,s\right)\right]{\rm d}X_s .
\end{align*}

Recall that the difference between $\gamma \left(\vartheta ,t\right)$ and $
\varepsilon ^2\gamma ^*\left(\vartheta ,t\right)$ is of order $\varepsilon
^2$. Therefore the difference  $m^*\left(\vartheta
_0,t\right)-m_*\left(\vartheta _0,t\right)$ is of the same order. 
We have
\begin{align*}
\delta_m\left(t\right) &=\dot m\left(\vartheta _0,\tau \right)\delta_\vartheta \left(\tau
\right) \Phi^* \left(\vartheta
_0,0,\tau \right) +\int_{\tau }^{t}\Phi^* \left(\vartheta _0,s,t\right)
\dot  A\left(\vartheta
  _0,s\right) m_{s,\varepsilon }^\star  \delta_\vartheta
\left(s\right)  {\rm d}s\\ 
&\qquad  +f\sigma ^{-2}\int_{\tau }^{t}\Phi^* \left(\vartheta _0,s,t\right)
\dot 
  \gamma^*\left(\vartheta _0,s\right)\delta_\vartheta
\left(s\right) {\rm d}X_s +O\left(\varepsilon ^2\right)
\end{align*}
where $\delta_\vartheta
\left(s\right)=\vartheta _{s,\varepsilon }^\star-\vartheta _0 $. Therefore
\begin{align}
\label{21-z34}
\varepsilon ^{-1}&\delta_m\left(t\right) =\dot m\left(\vartheta _0,\tau
\right) \Phi^* \left(\vartheta
_0,0,\tau\right)\varepsilon ^{-1}\delta_\vartheta \left(\tau 
\right) +\int_{\tau }^{t}\Phi^* \left(\vartheta _0,s,t\right)
\dot  A\left(\vartheta
  _0,s\right) m_{s,\varepsilon }^\star \varepsilon ^{-1} \delta_\vartheta
\left(s\right)  {\rm d}s\nonumber\\ 
&\qquad\qquad  +f\sigma ^{-2}\int_{\tau }^{t}\Phi^* \left(\vartheta _0,s,t\right)
\dot 
  \gamma^*\left(\vartheta _0,s\right)\varepsilon ^{-1}\delta_\vartheta
\left(s\right) {\rm d}X_s +O\left(\varepsilon \right)\nonumber\\
&\Longrightarrow \dot y\left(\vartheta _0,\vartheta _0,\tau \right) \Phi^*
\left(\vartheta _0,0,\tau\right)\eta _{\tau
}\left(\vartheta _0\right) +\int_{\tau }^{t}\Phi^* \left(\vartheta _0,s,t\right) 
\dot  A\left(\vartheta
  _0,s\right) y_s\left(\vartheta _0\right) \eta _s\left(\vartheta _0\right)
      {\rm d}s\nonumber\\  
&\qquad\qquad  +f^2\sigma ^{-2}\int_{\tau }^{t}\Phi^* \left(\vartheta _0,s,t\right)
\dot  
  \gamma^*\left(\vartheta _0,s\right)y_s\left(\vartheta _0\right) \eta
  _s\left(\vartheta _0\right) {\rm d}s  \nonumber\\  
&=G_\tau \left(\vartheta _0,\vartheta _0\right)\Phi^* \left(\vartheta
_0,0,\tau\right)\zeta  \left(\vartheta _0,\tau \right)\nonumber\\
&\qquad +\int_{\tau }^{t}\Phi^*
  \left(\vartheta _0,s,t\right)  
\dot  A\left(\vartheta
  _0,s\right) H_s\left(\vartheta _0,\vartheta _0\right) \zeta  \left(\vartheta
_0,s \right)      {\rm d}s\nonumber\\
&\qquad \qquad +f^2\sigma ^{-2}\int_{\tau }^{t}\Phi^* \left(\vartheta _0,s,t\right)
\dot  
  \gamma^*\left(\vartheta _0,s\right)H_s\left(\vartheta _0,\vartheta _0\right)
  \zeta  \left(\vartheta _0,s \right)      {\rm d}s\nonumber\\  
&=G_\tau \left(\vartheta _0,\vartheta _0\right)\Phi^* \left(\vartheta
_0,0,\tau\right)\zeta  \left(\vartheta _0,\tau \right)+\int_{\tau }^{t}\Phi^*
  \left(\vartheta _0,s,t\right)  
 H_s\left(\vartheta _0,\vartheta _0\right) \zeta  \left(\vartheta
_0,s \right)      {\rm d}s\nonumber\\  
&\equiv \xi  _m^*\left(\vartheta _0,t\right) .
\end{align}
\end{proof}

\section{Discussion}
We considered three relatively simple models. The obtained results can be
generalized for more general, say, multidimensional partially observed
systems. The principal question in each model is: how to construct the
preliminary estimator.

Remark, that in the third problem (random initial value) the family of
measures of this statistical experiment with unknown parameter $\vartheta $ is
locally asymptotically mixing normal
\index{family of measures!locally asymptotically mixing normal} (LAMN) (see
Remark \ref{R21-15})
The study of MLE  can be an interesting problem but we will not do it here.

 Of course
there is a problem of definition of asymptotic efficiency of adaptive filter in the problem of
estimation of  $m\left(\vartheta _0,t\right)$. Remind that in the construction of
the lower bound we obtain on the right hand side the expression like 
$$
\Ex_\vartheta\left[ \dot m\left(\vartheta ,t\right) {\rm I}_0^t\left(\vartheta \right)^{-1}
\Delta _\varepsilon \left(\vartheta ,X^t\right)\right]^2
$$ 
where  the random variable   $y_0$  vanishes in  the limit. This probably can be
proved but we do not occupy by this problem here. In any case the proposed
here One-step MLE-process with fixed $\tau $ is not asymptotically efficient.

Hence the MLE $\hat\vartheta _\varepsilon $ can be asymptotically conditionally
normal, but in the problem of estimation  $m\left(\vartheta _0,t\right)$ the
asymptotically efficient estimator can be asymptotically normal. Of course
there is a problem of definition of asymptotic efficiency in the problem of
estimation $m\left(\vartheta _0,t\right)$ (see \cite{Kut24a}-\cite{Kut24c}
where the question of asymptotic optimality of the adaptive filters was discussed).

{\bf Acknowledgments.}  I am gratefull to  Reviwers for usefull comments.
This research was financially supported by the Russian Science Foundation
research project No 24-11-00191.



\end{document}